\UseRawInputEncoding
\documentclass[11pt]{article}
\usepackage{amsmath}

\usepackage{amsmath,amssymb,amsthm,graphicx,mathrsfs}
\usepackage[numbers,sort&compress]{natbib}
\usepackage{color}
\usepackage[colorlinks,bookmarksopen,bookmarksnumbered,citecolor=black, linkcolor=blue, urlcolor=black]{hyperref}
\numberwithin{equation}{section}
\newcommand{\beq}{\begin{equation}}
\newcommand{\enq}{\end{equation}}

\newtheorem{Theorem}{Theorem}[section]
\newtheorem{Lemma}[Theorem]{Lemma}
\newtheorem{Corollary}[Theorem]{Corollary}
\newtheorem{Definition}[Theorem]{Definition}
\newtheorem{Remark}[Theorem]{Remark}

\newcommand{\benu}{\begin{enumerate}}
\newcommand{\beqa}{\begin{eqnarray}}
\newcommand{\beqan}{\begin{eqnarray*}}
\newcommand{\eay}{\end{array}}
\newcommand{\edm}{\end{displaymath}}
\newcommand{\eenu}{\end{enumerate}}
\newcommand{\eeq}{\end{equation}}
\newcommand{\eeqa}{\end{eqnarray}}
\newcommand{\eeqan}{\end{eqnarray*}}

\newcommand{\br}{\begin{Remark}}
\newcommand{\er}{\end{Remark}}

\newcommand{\bqa}{\begin{eqnarray}}
\newcommand{\eqa}{\end{eqnarray}}
\newcommand{\bqw}{\begin{eqnarray*}}
\newcommand{\eqw}{\end{eqnarray*}}

\newcommand{\bea}{\begin{array}{cc}}
\newcommand{\ena}{\end{array}}

\allowdisplaybreaks[4]
\begin{document}
\begin{center}
{\LARGE Pullback Attractors for a Non-Autonomous Plate System with Strong Damping and
Delay\\}
{\vspace{0.20in} Xuxiao Hou ${}^{\mathrm{1}}$ \quad Bin Yang ${}^{\mathrm{2},\dag}$$\ $ Yuming Qin $^{3}$$\ $ Alain Miranville $^{4}$\\}
\end{center}
{\small
$^{1}$ School of Automation and Electrical Engineering, Inner Mongolia University of Science and Technology, Baotou 014010, Inner Mongolia, China\\
$^{2}$ School of Science, Inner Mongolia University of Science and Technology, Baotou 014010, Inner Mongolia, China \\
$^{3}$ School of Mathematics and Statistics, Institute for Nonlinear Science, Donghua University, Shanghai 201620, China\\
$^{4}$ Laboratoire de Math\'ematiques Appliqu\'ees du Havre (LMAH), Universit\'e Le Havre Normandie, 76063 Le Havre Cedex, France
 \vspace{3mm}
}

\begin{abstract}
This paper addresses the existence of pullback attractors for a class of nonlinear non-autonomous strongly damped plate equations with delay. The model contains the biharmonic operator, the strong damping term, a nonlinear source term and a delay operator acting on the history of the solution. The interaction between the strong damping mechanism and the hereditary delay effect gives rise to several analytical difficulties in deriving uniform estimates and proving pullback asymptotic compactness. To overcome these difficulties, we construct a modified energy functional adapted to the strong damping structure and employ the contractive function method to handle the delay term in the history phase space. By establishing the existence and uniqueness of weak solutions, uniform pullback estimates and the pullback asymptotic compactness of the associated non-autonomous process, we prove the existence of pullback attractors for the strongly damped plate equation with delay.
\end{abstract}

\hspace{4mm}{\bf Keywords:} Pullback attractors; Non-autonomous plate equations; Strong damping; Delay

\hspace{4mm}{\bf 2020 MSC:} 35B41, 35B40, 35L75, 35R10

\section{Introduction}
\setcounter{equation}{0}
\let\thefootnote\relax\footnote{*Corresponding author: binyangdhu@163.com}
\let\thefootnote\relax\footnote{\footnotesize E-mails: hxximust2025@163.com, yuming@dhu.edu.cn, alain.miranville@univ-lehavre.fr}
\noindent
In this paper, we investigate the following non-autonomous plate equation with strong damping and delay:
\begin{equation}\label{1.1}
\left\{
\begin{alignedat}{3}
&u_{tt}-\Delta u_t+\Delta^2u+\lambda u+f(u)
=G(t,u^t)+h(x,t),
&\qquad& x\in\Omega,
&\qquad& t>\tau,\\
&u=\Delta u=0,
&\qquad& x\in\partial\Omega,
&\qquad& t>\tau,\\
&u(x,\tau+\theta)=\phi_0(x,\theta),
&\qquad& x\in\Omega,
&\qquad& \theta\in[-r,0],\\
&u_t(x,\tau+\theta)=\phi_1(x,\theta),
&\qquad& x\in\Omega,
&\qquad& \theta\in[-r,0].
\end{alignedat}
\right.
\end{equation}

Here \(\Omega\subset\mathbb R^N\) is a bounded domain with smooth boundary, \(N\geq1\), \(\lambda>0\), \(r>0\) is the length of the delay, \(h(x,t)\) is a time-dependent external force, \(f\) is the nonlinear term, and \(G(t,u^t)\) is a delay operator depending on the displacement history. For each \(t\geq\tau\), the history function \(u^t\) is defined by
\begin{equation}\label{1.2}
u^t(\theta)=u(t+\theta),\qquad \theta\in[-r,0].
\end{equation}

Throughout this paper, \(u^t\) denotes the history function, whereas \(u_t\) denotes the time derivative of \(u\). The term \(\Delta^2u\) shows that \eqref{1.1} is a plate-type equation with Navier boundary condition, while \(-\Delta u_t\) is the strong damping term. In the energy estimate, the strong damping term produces the dissipative quantity \((-\Delta u_t,u_t)=\|\nabla u_t\|^2\), which provides spatial dissipation for the velocity. Therefore, problem \eqref{1.1} combines the fourth-order structure of plate equations, the velocity-gradient dissipation of strongly damped systems and the hereditary feature of delayed equations.

Let \(H=L^2(\Omega)\) and \(V=H^2(\Omega)\cap H_0^1(\Omega)\). The norm and inner product in \(H\) are denoted by \(\|\cdot\|\) and \((\cdot,\cdot)\), respectively. For a Banach space \(X\), denote by \(C_X=C([-r,0];X)\) the space of continuous functions from \([-r,0]\) into \(X\), endowed with the norm \(\|\xi\|_{C_X}=\displaystyle\sup_{\theta\in[-r,0]}\|\xi(\theta)\|_X\). The ambient history space associated with \eqref{1.1} is
\begin{equation}\label{1.3}
\mathcal H=C_V\times C_H.
\end{equation}

Since both the displacement history and the velocity history are prescribed in \eqref{1.1}, not every element of \(\mathcal H\) is admissible as an initial datum. The two histories must be compatible with the relation \(u_t=\partial_tu\) on the initial interval. Therefore, we introduce the compatible initial history space
\begin{equation}\label{1.4}
\mathcal H_c=
\left\{
(\phi_0,\phi_1)\in C_V\times C_H:
\phi_0\in C^1([-r,0];H),\
\partial_\theta\phi_0=\phi_1\ \text{in } C_H
\right\}.
\end{equation}

The space \(\mathcal H_c\) is endowed with the norm inherited from \(\mathcal H\), namely \(\|(\phi_0,\phi_1)\|_{\mathcal H_c}:=\|(\phi_0,\phi_1)\|_{\mathcal H}\). The initial data of problem \eqref{1.1} are always assumed to satisfy \((\phi_0,\phi_1)\in\mathcal H_c\). Thus, the natural phase space for the process generated by \eqref{1.1} is not the usual product space \(V\times H\), but the compatible history space \(\mathcal H_c\).

We assume that
\begin{equation}\label{1.5}
f\in C^1(\mathbb R,\mathbb R),\qquad f(0)=0,
\end{equation}
and define
\begin{equation}\label{1.6}
F(s)=\int_0^s f(\xi)\,d\xi.
\end{equation}

To ensure that the energy functional is bounded from below, we suppose that there exists \(C_f>0\) such that
\begin{equation}\label{1.7}
F(s)\geq -C_f-\frac{\lambda}{4}s^2,\qquad s\in\mathbb R.
\end{equation}

Moreover, to derive dissipative estimates, we assume that there exist constants \(\mu_0>0\), \(\delta_0>0\) and \(C_0>0\), with \(\delta_0>0\) sufficiently small, such that
\begin{equation}\label{1.8}
f(s)s\geq \mu_0F(s)-\delta_0s^2-C_0,\qquad s\in\mathbb R.
\end{equation}

The growth condition on \(f\) is chosen according to the plate energy space \(V=H^2(\Omega)\cap H_0^1(\Omega)\). If \(N>4\), by the Sobolev embedding \(H^2(\Omega)\hookrightarrow L^{\frac{2N}{N-4}}(\Omega)\), we assume that there exists \(C>0\) such that
\begin{equation}\label{1.9}
|f(s)|\leq C(1+|s|^\rho),\qquad
1\leq \rho\leq \frac{N}{N-4}.
\end{equation}

The case \(\rho=\frac{N}{N-4}\) is the critical growth case with respect to \(H^2(\Omega)\), while \(1\leq \rho<\frac{N}{N-4}\) is the subcritical case. If \(1\leq N\leq4\), the embedding \(H^2(\Omega)\hookrightarrow L^q(\Omega)\) holds for every finite \(q\geq1\); hence finite polynomial growth is admissible.

For uniqueness and continuous dependence, we further assume that \(f\) is locally Lipschitz from bounded subsets of \(V\) into \(H\). More precisely, for every \(R>0\), there exists a constant \(L_R>0\) such that
\begin{equation}\label{1.10}
\|f(u)-f(v)\|\leq L_R\|u-v\|_V
\end{equation}
whenever \(\|u\|_V+\|v\|_V\leq R\).

Since \(V\hookrightarrow H\) continuously, every displacement history \(u^t\in C_V\) can also be regarded as an element of \(C_H\). Therefore, the delay operator \(G(t,u^t)\) is assumed to depend only on the displacement history in the \(C_H\)-topology. More precisely, although the full phase variable is the pair \((u^t,(u_t)^t)\in\mathcal H_c\), the delay term \(G(t,u^t)\) acts only on the first component \(u^t\), viewed as an element of \(C_H\).

For the delay term, we impose the following assumptions \(G:\mathbb R\times C_H\to H\) is well-defined and satisfies the following conditions: \({\rm (G1)}\) \(t\mapsto G(t,\xi)\) is measurable for every \(\xi\in C_H\); \({\rm (G2)}\) \(G(t,0)=0\) for all \(t\in\mathbb R\); \({\rm (G3)}\) there exists a constant \(L_G>0\) such that \(\|G(t,\xi)-G(t,\eta)\|\leq L_G\|\xi-\eta\|_{C_H}\) for all \(t\in\mathbb R\) and \(\xi,\eta\in C_H\); \({\rm (G4)}\) there exist constants \(m_0>0\) and \(C_1>0\) such that, for every \(m\in[0,m_0]\), \(\int_{\tau}^{t}e^{ms}\|G(s,u^s)-G(s,v^s)\|^2\,ds\leq C_1\int_{\tau-r}^{t}e^{ms}\|u(s)-v(s)\|^2\,ds\).

The external force is assumed to satisfy
\begin{equation}\label{1.11}
h\in L^2_{\mathrm{loc}}(\mathbb R;H).
\end{equation}

For the pullback dynamics, we assume that the effective dissipation dominates the possible growth caused by the delay term. More precisely, let $\beta>0$ be the dissipation rate obtained from the modified energy estimate, and let $\delta_G>0$ be the constant generated by the delay estimate through condition (G4). We set
\begin{equation}\label{1.12}
\alpha=\beta-\delta_G>0.
\end{equation}

Moreover, the external force is assumed to satisfy
\begin{equation}\label{1.13}
\int_{-\infty}^{t} e^{\alpha s}\|h(s)\|^{2}\,ds<+\infty,\quad t\in \mathbb R.
\end{equation}

Plate equations constitute an important class of fourth-order evolution equations describing the vibration and deformation of elastic structures. Owing to the presence of the biharmonic operator, their long-time dynamics are generally studied in higher-order energy spaces and require compactness arguments different from those used for second-order wave equations. The abstract theory of pullback attractors and non-autonomous dynamical systems has been systematically developed in \cite{Bortolan2020,Carvalho2013,Chepyzhov2002,Temam1997} and subsequently applied to various plate-type systems. Yang and Zhong \cite{Yang2008} studied a plate equation with nonlinear damping and proved the existence of a global attractor. Closely related to the present work, Carbone et al.\ \cite{Carbone2011} considered the singularly non-autonomous plate equation
\(u_{tt}+a(t,x)u_t-\Delta u_t+\Delta^2u+\lambda u=f(u)\) in
\(\bigl(H^2(\Omega)\cap H_0^1(\Omega)\bigr)\times L^2(\Omega)\), subject to the Navier boundary conditions
\(u=\Delta u=0\) on \(\partial\Omega\), where \(a(t,x)\) denotes a damping coefficient and \(\lambda>0\). Thus, their model contains the same biharmonic operator and strong damping mechanism as the present equation, but its phase variable records only \((u,u_t)\) and does not include a general delay operator acting on the displacement history. This distinction is essential because hereditary terms require estimates over an interval of past states rather than only at the current time. Related results for strongly damped, thermoelastic and other non-autonomous plate systems can be found in \cite{Aouadi2023,Aouadi2024,Bezerra2026}.

Delay effects have also been incorporated into several plate models. Aouadi \cite{Aouadi2020} studied an extensible thermoelastic plate equation with time-varying delayed feedback and established the existence of global and exponential attractors. The system consists of the plate equation \(u_{tt}+\Delta^2u-M\bigl(\|\nabla u\|^2\bigr)\Delta u+\mu_1u_t+\mu_2u_t\bigl(t-\tau(t)\bigr)+\vartheta=h(x,t)\), coupled with the heat equation \(\vartheta_t-\Delta\vartheta-\Delta u_t=0\). Here, \(M\bigl(\|\nabla u\|^2\bigr)\) is a Kirchhoff-type nonlocal coefficient, \(\vartheta\) denotes the temperature variable, \(\mu_1u_t\) is the instantaneous damping term, and \(\mu_2u_t\bigl(t-\tau(t)\bigr)\) is the delayed velocity feedback. Consequently, the phase space contains the variables \((u,u_t,\vartheta)\) together with an associated delay-history component. Variable-delay, Kirchhoff and stochastic delayed plate equations were further studied in \cite{Qin2023,Yao2023}. However, these works mainly involve discrete delay terms such as \(u_t\bigl(t-\tau(t)\bigr)\), whereas the operator \(G(t,u^t)\) considered here depends on the entire displacement history \(u^t(\theta)=u(t+\theta)\) for \(\theta\in[-r,0]\). Since \(u_t\) denotes the current velocity, whereas \(u^t\) denotes the displacement history, they represent fundamentally different quantities. To record both the displacement and velocity histories,and formulate the associated process in the compatible history space \(\mathcal H_c=\left\{(\phi_0,\phi_1)\in C_V\times C_H:\phi_0\in C^1([-r,0];H),\ \partial_\theta\phi_0=\phi_1\ \text{in }C_H\right\}\). The compatibility condition \(\partial_\theta\phi_0=\phi_1\) guarantees that the prescribed displacement and velocity histories originate from the same solution trajectory. Therefore, the simultaneous treatment of the structural strong damping term \(-\Delta u_t\), the hereditary delay operator \(G(t,u^t)\), and the compatibility between the two history components constitutes the principal distinction of the present problem.

The main difficulties and contributions of this paper are summarized as follows.

(i) The simultaneous presence of the biharmonic operator \(\Delta^2u\), the strong damping term \(-\Delta u_t\), the nonlinear source \(f(u)\), the hereditary delay \(G(t,u^t)\), and the non-autonomous force \(h(x,t)\) produces a complicated coupled energy structure. To close the dissipative estimate, we introduce the modified energy functional \(L(t)=2E(t)+\varepsilon (2(u_t(t),u(t))+\|\nabla u(t)\|^2)+K_0\) and combine it with the weighted delay estimate , where E(t) is the standard energy functional defined in (3.29). Under \(\beta>\delta_G\), this yields the decay exponent \(\alpha=\beta-\delta_G>0\) and a pullback \(\mathcal D_\alpha\)-absorbing family.

(ii) To capture the intrinsic kinematic relation between the displacement and velocity histories, we do not treat them as two independent components of \(C_V\times C_H\). Instead, we introduce the compatible history space \(\mathcal H_c\), where the compatibility condition ensures that both histories originate from the same solution trajectory. We prove that this compatibility is preserved by the evolution and establish the existence, uniqueness and continuous dependence of weak solutions. Consequently, \(U(t,\tau)(\phi_0,\phi_1)=(u^t,(u_t)^t)\) defines a well-posed continuous process on \(\mathcal H_c\), providing the rigorous phase-space framework required for the subsequent pullback analysis.

(iii) The principal compactness difficulty is that the Sobolev embedding at the critical growth exponent is continuous but no longer compact, while the hereditary delay requires uniform control of the entire history segment. To overcome this difficulty, we combine the additional regularity generated by the strong damping term \(-\Delta u_t\) with the contractive-function estimate \(\|U(t,t-T)\xi_1-U(t,t-T)\xi_2\|_{\mathcal H_c}^2\leq\varepsilon+\Psi_{t,T}(\xi_1,\xi_2)\). By proving that \(\Psi_{t,T}\) has the required double-limit vanishing property on the pullback absorbing family, we recover compactness of both the displacement and velocity histories without relying on the compactness of the critical Sobolev embedding. This establishes pullback \(\mathcal D_\alpha\)-asymptotic compactness in \(\mathcal H_c\), including the critical-growth case, and consequently yields a unique pullback \(\mathcal D_\alpha\)-attractor.

The paper is organized as follows. In Section 2, we recall the basic notions of compatible phase spaces, pullback absorbing families and pullback attractors. In Section 3, we prove the existence, uniqueness and continuous dependence of weak solutions with initial histories in \(\mathcal H_c\). In Section 4, we establish the pullback absorbing family and the pullback asymptotic compactness of the process in \(\mathcal H_c\), and then obtain the existence and uniqueness of the pullback \(\mathcal D_\alpha\)-attractor in \(\mathcal H_c\).

\section{Preliminaries}

In this section, we recall some basic notions and abstract results concerning pullback attractors, which will be used to study the long-time behavior of problem \eqref{1.1}.

For two nonempty subsets \(B_1,B_2\subset\mathcal H_c\), the Hausdorff semi-distance from \(B_1\) to \(B_2\) is defined by
\[
\operatorname{dist}_{\mathcal H_c}(B_1,B_2)
=
\sup_{\xi\in B_1}\inf_{\eta\in B_2}
\|\xi-\eta\|_{\mathcal H_c}.
\]
We denote by \(\mathcal P(\mathcal H_c)\) the family of all nonempty subsets of \(\mathcal H_c\).

A family \(\{U(t,\tau)\}_{t\geq\tau}\) is called a process on \(\mathcal H_c\) if \(U(t,\tau):\mathcal H_c\to\mathcal H_c\) is continuous for every \(t\geq\tau\), \(U(\tau,\tau)=I\), and
\[
U(t,\tau)=U(t,s)U(s,\tau),
\qquad
\tau\leq s\leq t.
\]

Let \(\mathcal D\) be a nonempty class of parameterized families \(\widehat D=\{D(t):t\in\mathbb R\}\subset\mathcal P(\mathcal H_c)\).

\begin{Definition}\label{def2.1}
The process \(\{U(t,\tau)\}_{t\geq\tau}\) is said to be pullback \(\mathcal D\)-asymptotically compact in \(\mathcal H_c\) if, for any \(t\in\mathbb R\), any \(\widehat D\in\mathcal D\), any sequence \(\tau_n\to-\infty\), and any sequence \(\xi_n\in D(\tau_n)\), the sequence
\[
\{U(t,\tau_n)\xi_n\}_{n=1}^{\infty}
\]
is precompact in \(\mathcal H_c\).
\end{Definition}

\begin{Definition}\label{def2.2}
A family \(\widehat B=\{B(t):t\in\mathbb R\}\subset\mathcal P(\mathcal H_c)\) is called a pullback \(\mathcal D\)-absorbing family for the process \(\{U(t,\tau)\}_{t\geq\tau}\) if, for any \(t\in\mathbb R\) and any \(\widehat D\in\mathcal D\), there exists \(\tau_0=\tau_0(t,\widehat D)\leq t\) such that
\[
U(t,\tau)D(\tau)\subset B(t),
\qquad
\tau\leq\tau_0.
\]
\end{Definition}

\begin{Definition}\label{def2.3}
A family \(\widehat{\mathcal A}=\{\mathcal A(t):t\in\mathbb R\}\subset\mathcal P(\mathcal H_c)\) is called a pullback \(\mathcal D\)-attractor for the process \(\{U(t,\tau)\}_{t\geq\tau}\) in \(\mathcal H_c\) if the following conditions hold:

\par\noindent
(i) \(\mathcal A(t)\) is compact in \(\mathcal H_c\) for every \(t\in\mathbb R\);

\par\noindent
(ii) \(\widehat{\mathcal A}\) pullback attracts every \(\widehat D\in\mathcal D\), that is,
\[
\lim_{\tau\to-\infty}
\operatorname{dist}_{\mathcal H_c}
\bigl(U(t,\tau)D(\tau),\mathcal A(t)\bigr)=0,
\qquad
t\in\mathbb R;
\]

\par\noindent
(iii) \(\widehat{\mathcal A}\) is invariant, namely,
\[
U(t,\tau)\mathcal A(\tau)=\mathcal A(t),
\qquad
-\infty<\tau\leq t<+\infty.
\]

\end{Definition}

For the pullback dynamics of \eqref{1.1}, we use the following tempered class of parameterized families. For a given \(\alpha>0\), denote by \(\mathcal D_\alpha\) the class of all families \(\widehat D=\{D(t):t\in\mathbb R\}\subset\mathcal P(\mathcal H_c)\) satisfying
\[
\lim_{\tau\to-\infty}
e^{\alpha\tau}
\sup_{\xi\in D(\tau)}
\|\xi\|_{\mathcal H_c}^{2}=0.
\]
This class is consistent with the exponential pullback estimate obtained later from the dissipative structure of \eqref{1.1}.

\begin{Definition}\label{def2.4}
Let \(B\) be a bounded subset of \(\mathcal H_c\). A nonnegative function \(\Psi(\cdot,\cdot)\) defined on \(B\times B\) is called a contractive function on \(B\times B\) if, for any sequence \(\{\xi_n\}_{n=1}^{\infty}\subset B\), there exists a subsequence \(\{\xi_{n_k}\}_{k=1}^{\infty}\) such that
\[
\lim_{k\to\infty}\lim_{l\to\infty}
\Psi(\xi_{n_k},\xi_{n_l})=0.
\]
The set of all contractive functions on \(B\times B\) is denoted by \(\operatorname{Contr}(B)\).
\end{Definition}

\begin{Theorem}\label{thm2.5}
Let \(\{U(t,\tau)\}_{t\geq\tau}\) be a process on \(\mathcal H_c\), and suppose that it has a pullback \(\mathcal D\)-absorbing family \(\widehat B=\{B(t):t\in\mathbb R\}\). Assume that, for any \(t\in\mathbb R\) and any \(\varepsilon>0\), there exist \(T=T(t,\widehat B,\varepsilon)>0\) and \(\Psi_{t,T}\in\operatorname{Contr}(B(t-T))\) such that
\[
\|U(t,t-T)\xi_1-U(t,t-T)\xi_2\|_{\mathcal H_c}^{2}
\leq
\varepsilon+\Psi_{t,T}(\xi_1,\xi_2)
\]
for all \(\xi_1,\xi_2\in B(t-T)\). Then the process \(\{U(t,\tau)\}_{t\geq\tau}\) is pullback \(\mathcal D\)-asymptotically compact in \(\mathcal H_c\).
\end{Theorem}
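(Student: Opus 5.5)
The plan is to show that the sequence $\{U(t,\tau_n)\xi_n\}$ is totally bounded, via a diagonal-type argument. Completeness of $\mathcal H_c$ then gives precompactness; alternatively, since $\mathcal H_c$ may not be closed in $\mathcal H$, I will just extract a Cauchy subsequence and work with precompactness in the metric sense of Definition \ref{def2.1}. Fix $t\in\mathbb R$, $\widehat D\in\mathcal D$, a sequence $\tau_n\to-\infty$ and points $\xi_n\in D(\tau_n)$. The core step is: for every $\varepsilon>0$, the sequence $y_n:=U(t,\tau_n)\xi_n$ has a finite $\sqrt{2\varepsilon}$-net, or equivalently every subsequence has a further subsequence that is eventually $\sqrt{2\varepsilon}$-close along the relevant double limit. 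A sequence with this property for all $\varepsilon$ is totally bounded.

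\textbf{Step 1 (moving into the absorbing set).} Given $\varepsilon>0$, take $T=T(t,\widehat B,\varepsilon)$ and $\Psi_{t,T}\in\operatorname{Contr}(B(t-T))$ from the hypothesis. By Definition \ref{def2.2}, applied at time $t-T$, there is $\tau_0=\tau_0(t-T,\widehat D)$ such that $U(t-T,\tau)D(\tau)\subset B(t-T)$ for all $\tau\le\tau_0$. Since $\tau_n\to-\infty$, there is $N$ with $\tau_n\le\min\{\tau_0,t-T\}$ for $n\ge N$. For such $n$, set $z_n:=U(t-T,\tau_n)\xi_n\in B(t-T)$. The process property then gives $y_n=U(t,t-T)z_n$.

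\textbf{Step 2 (using the contractive function).} Applying the hypothesis to $z_n,z_m\in B(t-T)$ gives $\|y_n-y_m\|_{\mathcal H_c}^2\le\varepsilon+\Psi_{t,T}(z_n,z_m)$. By Definition \ref{def2.4}, $\{z_n\}$ has a subsequence $\{z_{n_k}\}$ with $\lim_k\lim_l\Psi_{t,T}(z_{n_k},z_{n_l})=0$. Hence
\[
\limsup_{k\to\infty}\limsup_{l\to\infty}\|y_{n_k}-y_{n_l}\|_{\mathcal H_c}^2\le\varepsilon .
\]
The same argument applies to any subsequence of $\{y_n\}$ in place of the whole sequence.

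\textbf{Step 3 (total boundedness).} Suppose $\{y_n\}$ were not totally bounded. Then there would be $\delta>0$ and a subsequence, still denoted $\{y_n\}$, with $\|y_n-y_m\|\ge\delta$ for all $n\ne m$. Choose $\varepsilon=\delta^2/4$ and run Steps 1--2 on this subsequence. This produces indices $k$ with $\limsup_l\|y_{n_k}-y_{n_l}\|^2\le\varepsilon<\delta^2$, which contradicts the separation. So $\{y_n\}$ is totally bounded. To finish, I will diagonalise over $\varepsilon=1/j$ to extract a Cauchy subsequence, whose limit lies in the completion. I expect the main subtlety to be this last point: whether $\mathcal H_c$ is complete, that is, closed in $C_V\times C_H$. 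I will argue that it is. If $\phi_0^n\to\phi_0$ in $C_V$ and $\partial_\theta\phi_0^n=\phi_1^n\to\phi_1$ in $C_H$, then $\phi_0^n\to\phi_0$ in $C^1([-r,0];H)$, so $\partial_\theta\phi_0=\phi_1$. Thus $\mathcal H_c$ is closed, the Cauchy subsequence converges in $\mathcal H_c$, and precompactness follows.
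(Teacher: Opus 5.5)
Your proof is correct. The paper gives no proof of Theorem \ref{thm2.5}: it quotes it as a known abstract criterion of the contractive-function method and applies it in Theorem \ref{thm4.4}. The relevant comparison is therefore with the standard argument, which is essentially yours. You pass through the absorbing set at time $t-T$ and use the contractive function to obtain $\limsup_k\limsup_l\|y_{n_k}-y_{n_l}\|^2\le\varepsilon$ along a subsequence. Precompactness then follows.

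The literature usually finishes with an explicit diagonal extraction over $\varepsilon=1/j$, or with the Kuratowski measure of noncompactness. Your Step 3 instead gets total boundedness in one stroke, by contradiction with a $\delta$-separated subsequence. Choose that separated subsequence with increasing indices. This is possible because deleting finitely many terms cannot make the set totally bounded, and it guarantees $\tau_{n_j}\to-\infty$, so Steps 1--2 genuinely apply. Once total boundedness is known, the diagonalisation over $\varepsilon=1/j$ you announce at the end is unnecessary.

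The most useful extra in your proposal is the completeness check. Precompactness is required in $\mathcal H_c$, not in $C_V\times C_H$, and the paper never verifies that $\mathcal H_c$ is closed in $C_V\times C_H$, although the abstract result depends on it. Your closedness argument is right. It is cleanest to pass to the limit in $H$ in $\phi_0^n(\theta)=\phi_0^n(-r)+\int_{-r}^{\theta}\phi_1^n(s)\,ds$. This shows $\phi_0\in C^1([-r,0];H)$ with $\partial_\theta\phi_0=\phi_1$, without presupposing that the limit is $C^1$.
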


\begin{Theorem}\label{thm2.6}
Let \(\{U(t,\tau)\}_{t\geq\tau}\) be a continuous process on \(\mathcal H_c\). If the following two conditions hold:
\par\noindent
(i) \(\{U(t,\tau)\}_{t\geq\tau}\) has a pullback \(\mathcal D\)-absorbing family in \(\mathcal H_c\);

\par\noindent
(ii) \(\{U(t,\tau)\}_{t\geq\tau}\) is pullback \(\mathcal D\)-asymptotically compact in \(\mathcal H_c\),

\par\noindent
then the process possesses a unique pullback \(\mathcal D\)-attractor \(\widehat{\mathcal A}=\{\mathcal A(t):t\in\mathbb R\}\) in \(\mathcal H_c\).

\end{Theorem}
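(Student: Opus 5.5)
The plan is the standard compactness-plus-absorption construction. Fix $t\in\mathbb R$ and let $\widehat B$ be the pullback $\mathcal D$-absorbing family from (i). I would define
\[
\mathcal A(t)=\bigcap_{s\leq t}\overline{\bigcup_{\tau\leq s}U(t,\tau)B(\tau)}
\]
or, equivalently, the set of all limits of sequences $U(t,\tau_n)\xi_n$ with $\tau_n\to-\infty$ and $\xi_n\in B(\tau_n)$. Strictly, $\widehat B$ need not itself belong to $\mathcal D$. If it does not, I would use the $\omega$-limit of an arbitrary $\widehat D\in\mathcal D$ and take $\mathcal A(t)$ as the closure of the union over $\widehat D\in\mathcal D$. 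With the tempered $\mathcal D_\alpha$ used later, $\widehat B$ lies in the class, so I would add this as an implicit assumption, as is standard.

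\textbf{Nonemptiness and compactness.} Nonemptiness follows directly from (ii): choose any $\tau_n\to-\infty$ and $\xi_n\in B(\tau_n)$; then $U(t,\tau_n)\xi_n$ has a convergent subsequence. For compactness, take a sequence $\eta_k\in\mathcal A(t)$. Each $\eta_k$ is approximated within $1/k$ by some $U(t,\tau_k)\xi_k$ with $\tau_k\leq -k$ and $\xi_k\in B(\tau_k)$. By (ii) this sequence has a convergent subsequence, and hence so does $\eta_k$. Its limit lies in $\mathcal A(t)$ because $\mathcal A(t)$ is closed, as an intersection of closed sets.

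\textbf{Attraction.} I argue by contradiction. Suppose there exist $\widehat D\in\mathcal D$, $\epsilon>0$, $\tau_n\to-\infty$ and $\xi_n\in D(\tau_n)$ with $\operatorname{dist}(U(t,\tau_n)\xi_n,\mathcal A(t))\geq\epsilon$. Since $\widehat D$ is absorbed, $U(s,\tau_n)\xi_n$ eventually lies in $B(s)$; this is the step that uses the cocycle property to reduce to $\widehat B$. Asymptotic compactness applied to $\widehat D$ itself gives a subsequence converging to some $\eta$. I then have to show $\eta\in\mathcal A(t)$, which follows by writing $U(t,\tau_n)\xi_n=U(t,s_m)U(s_m,\tau_n)\xi_n$ with $s_m\to-\infty$ and using absorption. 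This contradicts the choice of the sequence.

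\textbf{Invariance.} For the inclusion $U(t,\tau)\mathcal A(\tau)\subset\mathcal A(t)$, take $\eta=\lim U(\tau,\tau_n)\xi_n$. Continuity of $U(t,\tau)$ gives $U(t,\tau)\eta=\lim U(t,\tau_n)\xi_n\in\mathcal A(t)$. For the reverse inclusion, take $\eta=\lim U(t,\tau_n)\xi_n$. By (ii) at time $\tau$, the sequence $U(\tau,\tau_n)\xi_n$ has a convergent subsequence with limit $\zeta\in\mathcal A(\tau)$, and continuity then gives $U(t,\tau)\zeta=\eta$.

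\textbf{Uniqueness.} If $\widehat{\mathcal A}'$ is another attractor in $\mathcal D$, then invariance together with attraction gives
\[
\operatorname{dist}(\mathcal A'(t),\mathcal A(t))=\operatorname{dist}\bigl(U(t,\tau)\mathcal A'(\tau),\mathcal A(t)\bigr)\to0,
\]
and symmetrically. Since both sets are closed, they coincide.

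\textbf{Main obstacle.} The delicate point is the class-membership issue: the attraction and uniqueness arguments need $\widehat B$, or $\widehat{\mathcal A}$, to belong to $\mathcal D$, or else a minimality convention for the attractor. I would handle this by first checking that $\mathcal D_\alpha$ is inclusion-closed and that the absorbing family constructed later is tempered.
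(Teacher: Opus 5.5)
The paper does not prove Theorem~\ref{thm2.6}; it quotes it as a standard abstract result. Your argument is the standard pullback $\omega$-limit construction behind it, and it is correct provided $\widehat B\in\mathcal D$. The individual steps all check out: the sequential characterization of $\mathcal A(t)$; compactness via the $1/k$-approximation and (ii); attraction via the diagonal choice $U(t,\tau_{n_m})\xi_{n_m}=U(t,s_m)\zeta_m$ with $\zeta_m\in B(s_m)$; and the two invariance inclusions via continuity of $U(t,\tau)$ together with (ii).

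Your caveat is substantive, not cosmetic: with Definition~\ref{def2.3} as written, the statement is false without it. Take $U(t,\tau)=I$ and let $\mathcal D$ consist of the constant families $D(t)\equiv\{a\}$, $a\in\mathcal H_c$. Then $B(t)\equiv\mathcal H_c$ is absorbing and (ii) holds trivially. Yet any compact attracting family must contain every $a$, so no attractor exists. In this example your fallback (the closure of the union of $\omega$-limits over $\widehat D\in\mathcal D$) is all of $\mathcal H_c$, so that route does not remove the need for the hypothesis either.

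Likewise, uniqueness must be read within $\mathcal D$, as you do. For $U(t,\tau)\xi=e^{-(t-\tau)}\xi$ and $\mathcal D_\alpha$ with $\alpha\in(0,2)$, both $\{0\}$ and $\{0,e^{-t}a\}$ with $a\neq0$ are compact, invariant and attracting.

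The one step you leave implicit is $\widehat{\mathcal A}\in\mathcal D$, which the ``symmetric'' half of your uniqueness argument needs. It follows because $\widehat B\in\mathcal D$ absorbs itself: $\bigcup_{\tau\leq s}U(t,\tau)B(\tau)\subset B(t)$ for $s\leq\tau_0(t,\widehat B)$, hence $\mathcal A(t)\subset\overline{B(t)}$. Moreover, $\mathcal D_\alpha$ is stable under passing to subsets and closures, since membership is defined through $\sup_{\xi\in D(\tau)}\|\xi\|_{\mathcal H_c}$. In the paper's application the extra hypothesis is available, because Corollary~\ref{cor4.3} gives $\widehat D_0\in\mathcal D_\alpha$.
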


\section{Existence and Uniqueness of Solutions}
\setcounter{equation}{0}

In this section, we prove the existence, uniqueness and continuity of weak solutions to equation \eqref{1.1}.
\begin{Lemma}\label{lem4.1}
A function \(u\) is called a weak solution of equation \eqref{1.1} on
\([\tau-r,T]\) with initial history
\((\phi_0,\phi_1)\in\mathcal H_c\) if
\[
u(t)=\phi_0(t-\tau),\qquad
u_t(t)=\phi_1(t-\tau),\qquad t\in[\tau-r,\tau],
\]
and
\[
u\in L^\infty(\tau,T;V),\qquad
u_t\in L^\infty(\tau,T;H)\cap L^2(\tau,T;H_0^1(\Omega)),
\qquad
u_{tt}\in L^2(\tau,T;V'),
\]
such that, for every \(\varphi\in V\) and almost every
\(t\in(\tau,T)\),
\begin{equation}\label{3.1}
\begin{aligned}
&( u_{tt}(t),\varphi)
+(\nabla u_t(t),\nabla\varphi)
+(\Delta u(t),\Delta\varphi)
+\lambda(u(t),\varphi)
+(f(u(t)),\varphi)  \\
&\qquad
=(G(t,u^t),\varphi)+(h(t),\varphi).
\end{aligned}
\end{equation}
\end{Lemma}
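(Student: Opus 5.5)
Since the statement is a definition phrased as a lemma, the only thing to verify is that it is \emph{consistent}: every term in \eqref{3.1} must be well defined for almost every \(t\in(\tau,T)\) under the stated regularity, and the matching conditions on \([\tau-r,\tau]\) must be meaningful for data in \(\mathcal H_c\). I will check these two points term by term and nothing more. In particular, I will not address existence, uniqueness or continuous dependence, which are separate results.

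\textbf{Linear terms of \eqref{3.1}.}
\begin{itemize}
\item Since \(u_{tt}\in L^2(\tau,T;V')\), the term \((u_{tt}(t),\varphi)\) is understood as the duality pairing \(\langle u_{tt}(t),\varphi\rangle_{V',V}\). It is finite for a.e. \(t\) and every \(\varphi\in V\).
\item Since \(u_t\in L^2(\tau,T;H_0^1(\Omega))\) and \(V\subset H_0^1(\Omega)\), the term \((\nabla u_t,\nabla\varphi)\) is finite for a.e. \(t\).
\item Since \(u\in L^\infty(\tau,T;V)\), the terms \((\Delta u,\Delta\varphi)\) and \(\lambda(u,\varphi)\) are bounded in \(t\).
\end{itemize}
The weak form is consistent with the Navier boundary conditions. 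For smooth \(u\), integrating by parts twice and using \(u=\Delta u=0\) and \(\varphi=0\) on \(\partial\Omega\) gives
\[
(\Delta^2u,\varphi)=(\Delta u,\Delta\varphi),\qquad (-\Delta u_t,\varphi)=(\nabla u_t,\nabla\varphi).
\]
Hence \eqref{3.1} is exactly the variational form of \eqref{1.1}.

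\textbf{Nonlinear term.} By \eqref{1.9} and the embedding \(V\hookrightarrow L^{2N/(N-4)}(\Omega)\) (any \(L^q\), \(q<\infty\), when \(N\le4\)), we have \(|f(u)|\le C(1+|u|^\rho)\) with \(2\rho\le 2N/(N-4)\). Therefore
\[
\|f(u)\|\le C\bigl(1+\|u\|_V^\rho\bigr),
\]
so \(f(u)\in L^\infty(\tau,T;H)\). Alternatively, this follows directly from \eqref{1.10} together with \(f(0)=0\).

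\textbf{Delay term.} This is the only point requiring some care, because \(G(t,u^t)\) needs \(u^t\in C_H\).
\begin{itemize}
\item On \([\tau-r,\tau]\), we have \(u=\phi_0(\cdot-\tau)\) with \(\phi_0\in C_V\subset C_H\).
\item On \([\tau,T]\), we have \(u\in W^{1,\infty}(\tau,T;H)\), since \(u_t\in L^\infty(\tau,T;H)\). Hence \(u\in C([\tau,T];H)\), and the two pieces agree at \(t=\tau\).
\end{itemize}
Thus \(u\in C([\tau-r,T];H)\) and \(u^t\in C_H\) for each \(t\). By (G3) and (G2),
\[
\|G(t,u^t)\|\le L_G\|u^t\|_{C_H},
\]
which is bounded in \(t\). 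Measurability of \(t\mapsto G(t,u^t)\) follows from (G1), (G3) and the continuity of \(t\mapsto u^t\) in \(C_H\), via the standard Carathéodory argument. Finally, \((h,\varphi)\in L^2(\tau,T)\) by \eqref{1.11}. Therefore every term of \eqref{3.1} lies in \(L^1(\tau,T)\), and the identity a.e. in \(t\) is meaningful.

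\textbf{Initial conditions.} For \(t\in[\tau-r,\tau]\), the conditions are pointwise identities in \(V\) and \(H\) respectively. They are compatible because \((\phi_0,\phi_1)\in\mathcal H_c\): there \(\partial_\theta\phi_0=\phi_1\), so \(u_t(t)=\phi_1(t-\tau)\) is indeed the derivative of \(u(t)=\phi_0(t-\tau)\) on \([\tau-r,\tau]\). This is precisely why \(\mathcal H_c\), rather than \(\mathcal H\), is the natural space of initial data.

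Moreover, \(u\in C([\tau,T];H)\) and \(u_t\in C([\tau,T];V')\), the latter from \(u_{tt}\in L^2(\tau,T;V')\). Hence the values \(u(\tau)=\phi_0(0)\) and \(u_t(\tau)=\phi_1(0)\) are attained in a well-defined (weak) sense.

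I expect no genuine obstacle here. The only delicate step is the continuity of \(u\) across \(t=\tau\) in \(H\), needed so that \(u^t\in C_H\); this is settled by the \(W^{1,\infty}(\tau,T;H)\) regularity and the matching condition at \(t=\tau\).
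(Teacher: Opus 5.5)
Your verification is correct. There is no proof in the paper to compare it against: the statement is a definition (labelled as a lemma), and the paper uses it directly in Theorem~\ref{thm3.1} without any well-definedness check.

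The facts you verify are the ones the paper establishes later, inside the proof of Theorem~\ref{thm3.1}, for the constructed solution:
\begin{itemize}
\item the growth bound giving $f(u)\in L^2(\tau,T;H)$, in \eqref{3.14}--\eqref{3.15};
\item the bound $\|G(t,u^t)\|\le C\|u^t\|_{C_H}$ from (G2)--(G3), stated just before \eqref{3.10};
\item $u\in C([\tau,T];H)$, in \eqref{3.26}.
\end{itemize}

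Your version makes two points explicit that the paper never addresses. The first is measurability of $t\mapsto G(t,u^t)$, which your Carath\'eodory argument handles via (G1), (G3) and continuity of $t\mapsto u^t$ in the separable space $C_H$. The second is continuity of $u$ across $t=\tau$ in $H$, which is needed for $u^t\in C_H$.

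On the second point, the agreement at $t=\tau$ is not derived; it is part of the definition. It amounts to reading the prescribed condition $u(\tau)=\phi_0(0)$ as a statement about the continuous $H$-valued representative of $u$ on $[\tau,T]$. That is the natural interpretation and the one you adopt. Likewise, $u_t(\tau)=\phi_1(0)$ is meant in the weak sense $u_t\in C_w([\tau,T];H)$, consistent with \eqref{3.25}.
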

The following theorem gives the existence of weak solutions by the Faedo--Galerkin method.

\begin{Theorem}\label{thm3.1}
Let \(\tau\in\mathbb R\), \(T>\tau\), and let \((\phi_0,\phi_1)\in\mathcal H_c\). Then equation \eqref{1.1} possesses a weak solution \(u=u(\cdot;\tau,\phi_0,\phi_1)\) on \([\tau-r,T]\). Moreover,
\begin{equation}\label{3.2}
u\in C([\tau-r,T];V),
\qquad
u_t\in C([\tau-r,T];H),
\end{equation}
and, for every \(t\in[\tau,T]\), the history pair \(\bigl(u^t,(u_t)^t\bigr)\) belongs to \(\mathcal H_c\).
\end{Theorem}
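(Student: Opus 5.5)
We construct the solution by the Faedo--Galerkin method, using the eigenfunctions $\{w_j\}$ of $-\Delta$ with Dirichlet condition. These are also eigenfunctions of $\Delta^2$ under the Navier condition $u=\Delta u=0$, so they form an orthogonal basis of $H$, $H_0^1(\Omega)$ and $V$ at once. Let $P_m$ be the orthogonal projection onto $\mathrm{span}\{w_1,\dots,w_m\}$. We look for
\[
u_m(t)=\sum_{j=1}^m g_{jm}(t)w_j
\]
solving \eqref{3.1} for $\varphi=w_j$, $j\le m$, on $(\tau,T]$, with prescribed past $u_m(t)=P_m\phi_0(t-\tau)$ on $[\tau-r,\tau]$. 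Because $(\phi_0,\phi_1)\in\mathcal H_c$, we have $\partial_t u_m=P_m\phi_1(t-\tau)$ on the initial interval, so the approximate histories are compatible.

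The resulting system is a retarded functional ODE for $(g_{jm})$. By (G3) and (1.10), its right-hand side is Lipschitz on bounded sets of $C([-r,0];\mathbb R^m)$ (norms are equivalent in finite dimensions), and it is measurable in $t$ by (G1) and \eqref{1.11}. The Carathéodory theory for delay equations then gives a unique local absolutely continuous solution, and this solution extends to $[\tau,T]$ once the a priori bounds below are available.

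\textbf{Energy estimate.} Take $\varphi=u_{m,t}$ and set
\[
E_m=\tfrac12\|u_{m,t}\|^2+\tfrac12\|\Delta u_m\|^2+\tfrac\lambda2\|u_m\|^2+\int_\Omega F(u_m)\,dx.
\]
This gives
\[
\frac{d}{dt}E_m+\|\nabla u_{m,t}\|^2=(G(t,u_m^t),u_{m,t})+(h,u_{m,t}).
\]
By \eqref{1.7}, $E_m+C_f|\Omega|$ controls $\|u_{m,t}\|^2+\|\Delta u_m\|^2+\|u_m\|^2$. By (G2)--(G3), $\|G(t,u_m^t)\|\le L_G\sup_{s\in[t-r,t]}\|u_m(s)\|$. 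Write $y(t)=\sup_{s\in[\tau-r,t]}\bigl(E_m(s)+C_f|\Omega|\bigr)$. The delay term is then bounded by $C\,y(t)$, and the forcing term by $\tfrac12\|h\|^2+\tfrac12\|u_{m,t}\|^2$.

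Integrating and taking suprema, Gronwall's inequality gives a bound for $y(T)$ in terms of $\|(\phi_0,\phi_1)\|_{\mathcal H}$, $\int_\tau^T\|h\|^2$ and $T-\tau$. The initial energy is finite because $|F(s)|\le C(|s|^2+|s|^{\rho+1})$ and $V\hookrightarrow L^{\rho+1}$ by \eqref{1.9}. Integrating the dissipation term gives $u_{m,t}$ bounded in $L^2(\tau,T;H_0^1(\Omega))$.

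Next we bound $u_{m,tt}$ in $L^2(\tau,T;V')$ by duality with $\varphi\in V$. This uses $\|f(u_m)\|\le C(1+\|u_m\|_V^\rho)$, which follows from \eqref{1.9} and the embedding $V\hookrightarrow L^{2\rho}$. Since $P_m$ is self-adjoint and uniformly bounded on $V$, the Galerkin projection causes no difficulty.

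\textbf{Passage to the limit.} We extract subsequences with $u_m\to u$ weak-$*$ in $L^\infty(V)$, $u_{m,t}\to u_t$ weak-$*$ in $L^\infty(H)$ and weakly in $L^2(H_0^1)$, and $u_{m,tt}\to u_{tt}$ weakly in $L^2(V')$. By Aubin--Lions, $u_m\to u$ strongly in $C([\tau,T];H_0^1(\Omega))$, and hence in $C([\tau,T];H)$. On $[\tau-r,\tau]$, $P_m\phi_0\to\phi_0$ uniformly in $V$ by compactness of the range of $\phi_0$ (Dini-type argument). Therefore $u_m^t\to u^t$ in $C_H$ uniformly in $t$, and (G3) gives $G(t,u_m^t)\to G(t,u^t)$ strongly in $L^2(\tau,T;H)$.

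For the nonlinearity, $f(u_m)$ is bounded in $L^2(\tau,T;H)$ and $f(u_m)\to f(u)$ a.e. This a.e. convergence uses strong convergence in $L^2(Q)$ and the continuity of $f$, valid even in the critical case. Hence $f(u_m)\rightharpoonup f(u)$ weakly, which is enough to pass to the limit in \eqref{3.1}. The initial history is recovered from the uniform convergence.

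\textbf{Continuity and compatibility.} This is the main obstacle. The standard Lions--Magenes lemma only gives $u\in C([\tau,T];V)$ and $u_t\in C([\tau,T];H)$ weakly. To get strong continuity we plan an energy equality argument, which requires testing with $u_t$; but $u_t\in L^2(H_0^1)$ is not in the dual of $u_{tt}$'s space. We handle this by regularizing: use the Lions--Strauss approach with a mollification in time of $P_m u$, or equivalently show that the limit satisfies the energy identity
\[
E(t)+\int_s^t\|\nabla u_t\|^2=E(s)+\int_s^t(G+h,u_t).
\]
The right-hand side is continuous in $t$, so $t\mapsto\|u_t\|^2+\|\Delta u\|^2+\lambda\|u\|^2+2\int_\Omega F(u)$ is continuous. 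Since $\int_\Omega F(u(t))$ is continuous in $t$ (by the strong $H_0^1$ continuity, weak $V$ continuity and growth), the norm $\|u_t\|^2+\|\Delta u\|^2$ is continuous. Combined with weak continuity, this upgrades to strong continuity and gives \eqref{3.2} on $[\tau,T]$. On $[\tau-r,\tau]$, continuity is inherited from $\phi_0\in C_V$ and $\phi_1\in C_H$, and the two pieces match at $t=\tau$.

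Finally, $u\in W^{1,1}(\tau-r,T;H)$, since $\partial_t u=\phi_1(\cdot-\tau)$ on the initial piece and $u_t\in C([\tau,T];H)$ afterwards. Together with $u_t\in C([\tau-r,T];H)$, this gives $u\in C^1([\tau-r,T];H)$. Restricting to $[t-r,t]$ shows that $u^t\in C_V\cap C^1([-r,0];H)$ with $\partial_\theta u^t=(u_t)^t$, i.e. $(u^t,(u_t)^t)\in\mathcal H_c$.
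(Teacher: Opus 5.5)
Your proposal follows the paper's proof essentially step by step: Galerkin in the Dirichlet eigenbasis, the energy estimate closed by Gronwall on the supremum over the history, the $L^2(V')$ bound on $u_{m,tt}$, Aubin--Lions, identification of $f(u)$ through a.e. convergence, and finally weak continuity upgraded to strong continuity via an energy identity. Your local variations are sound. These are: convergence of the delay term via (G3) and uniform convergence of $u_m^t$ in $C_H$ rather than via (G4), compactness in $C([\tau,T];H_0^1(\Omega))$, and the $C^1([\tau-r,T];H)$ argument for compatibility.

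The one real divergence is the energy identity, and there you are more careful than the paper.

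\textbf{What the paper does.} It obtains \eqref{3.28} by passing to the limit in \eqref{3.27}. But $u_m'(t)$, $\Delta u_m(t)$ and $\nabla u_m'$ converge only weakly. So this limit gives at best an inequality, and only from $s=\tau$, the one time where $E_m(s)\to E(s)$.

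\textbf{What your sketch still needs.} Your remedy is only a plan. A time mollification by itself gives the identity only for a.e.\ $s,t$. That is not enough: the weakly continuous representative of $u$ is fixed, and the strong damping rules out the usual time-reversal trick for reaching every $t$.

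\textbf{How to close it.} The spectral truncation alone does the job at every time. Since the $w_j$ diagonalize both $-\Delta$ and $\Delta^2$, testing \eqref{3.1} with $w_j$ shows that $a_j(t)=(u(t),w_j)$ solves
\[
a_j''+\mu_ja_j'+(\mu_j^2+\lambda)a_j=(g,w_j),\qquad g=G(t,u^t)+h-f(u)\in L^2(\tau,T;H).
\]
Hence $P_mu\in W^{2,2}(\tau,T;V_m)$, and it may be tested with $(P_mu)_t=P_mu_t$. This gives, for all $\tau\le s\le t\le T$,
\[
\mathcal E_m(t)+\int_s^t\|\nabla P_mu_t\|^2\,d\sigma=\mathcal E_m(s)+\int_s^t(g,P_mu_t)\,d\sigma,
\qquad
\mathcal E_m=\tfrac12\|P_mu_t\|^2+\tfrac12\|\Delta P_mu\|^2+\tfrac{\lambda}{2}\|P_mu\|^2 .
\]
Now let $m\to\infty$ at each fixed time:
\begin{itemize}
\item the energy terms converge by Parseval in $H$, $H_0^1(\Omega)$ and $V$;
\item the dissipation converges by monotone convergence;
\item the right-hand side converges by dominated convergence, with dominating function $\|g\|\,\|u_t\|\in L^1$.
\end{itemize}
This yields the identity for the quadratic energy at every pair $s\le t$. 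Because $f(u)$ stays inside the forcing, you no longer need continuity of $\int_\Omega F(u(t))\,dx$. With this step filled in, the rest of your argument goes through.
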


{\bf Proof.}
Let \(\{w_j\}_{j\geq1}\) be an orthonormal basis of \(H\)
consisting of eigenfunctions of the Dirichlet Laplacian,
with \(w_j\in V\) for every \(j\geq1\). Thus \(-\Delta w_j=\mu_jw_j\), \(w_j|_{\partial\Omega}=0\), \(0<\mu_1\leq \mu_2\leq\cdots\), and \({\rm span}\{w_j:j\geq1\}\) is dense in \(V\). For each \(m\in\mathbb N\), set \(V_m={\rm span}\{w_1,\ldots,w_m\}\), and let \(P_m\) be the \(H\)-orthogonal projection from \(H\) onto \(V_m\). Since the basis is generated by the Dirichlet Laplacian, \(P_m\) is uniformly bounded on \(V\), namely, \(\|P_mz\|_V\leq C\|z\|_V\) for \(z\in V\).

We seek an approximate solution of the form \(u_m(t)=\sum\limits_{j=1}^{m}d_{jm}(t)w_j\). The Galerkin system is given by
\begin{equation}\label{3.3}
\left\{
\begin{alignedat}{2}
&(u_m''(t),w_j)+(\nabla u_m'(t),\nabla w_j)
 +(\Delta u_m(t),\Delta w_j)
 +\lambda(u_m(t),w_j)
 +(f(u_m(t)),w_j) &&\\
&\qquad =(G(t,u_m^t),w_j)+(h(t),w_j),
 \qquad \quad\quad\quad\quad\quad\quad \text{for a.e. } t>\tau,
 \quad 1\leq j\leq m,\\
&u_m(t)=P_m\phi_0(t-\tau),
 \qquad u_m'(t)=P_m\phi_1(t-\tau),
 \qquad t\in[\tau-r,\tau].
\end{alignedat}
\right.
\end{equation}

Since \((\phi_0,\phi_1)\in\mathcal H_c\), we have \(\phi_0\in C([-r,0];V)\cap C^1([-r,0];H)\) and \(\partial_\theta\phi_0=\phi_1\) in \(C_H\). Hence the prescribed histories in \eqref{3.3} are compatible with the relation \(u_m'=\partial_tu_m\) on \([\tau-r,\tau]\). By the continuity and local Lipschitz property of the right-hand side with respect to the finite-dimensional coefficients, it admits a local solution. We shall derive estimates independent of \(m\), which imply that this solution can be extended to the whole interval \([\tau,T]\).

Multiplying the first equation in \eqref{3.3} by \(d'_{jm}(t)\), summing over \(j=1,\ldots,m\), we obtain
\begin{equation}\label{3.4}
\frac{d}{dt}E_m(t)+\|\nabla u_m'(t)\|^2
=(G(t,u_m^t),u_m'(t))+(h(t),u_m'(t)),
\end{equation}
where
\begin{equation}\label{3.5}
E_m(t)=
\frac12\|u_m'(t)\|^2
+\frac12\|\Delta u_m(t)\|^2
+\frac{\lambda}{2}\|u_m(t)\|^2
+\int_\Omega F(u_m(t))\,dx.
\end{equation}

Applying the Cauchy--Schwarz, Poincar\'e, and Young inequalities, we derive
\begin{equation}\label{3.6}
\begin{aligned}
|(G(t,u_m^t),u_m'(t))|
&\leq \|G(t,u_m^t)\|\,\|u_m'(t)\|  \\
&\leq C\|G(t,u_m^t)\|\,\|\nabla u_m'(t)\|  \\
&\leq \frac14\|\nabla u_m'(t)\|^2+C\|G(t,u_m^t)\|^2,
\end{aligned}
\end{equation}
and similarly
\begin{equation}\label{3.7}
|(h(t),u_m'(t))|
\leq
\frac14\|\nabla u_m'(t)\|^2+C\|h(t)\|^2.
\end{equation}

Combining \eqref{3.4}, \eqref{3.6} and \eqref{3.7}, we get
\begin{equation}\label{3.8}
\frac{d}{dt}E_m(t)+\frac12\|\nabla u_m'(t)\|^2
\leq
C\|G(t,u_m^t)\|^2+C\|h(t)\|^2.
\end{equation}

By \eqref{1.7}, there exist constants \(C_2, C_3>0\), independent of \(m\), such that
\begin{equation}\label{3.9}
E_m(t)+C_2
\geq
C_3\left(
\|u_m'(t)\|^2+\|\Delta u_m(t)\|^2+\|u_m(t)\|^2
\right).
\end{equation}

Since \(G(t,0)=0\), it follows from {\rm (G3)} that \(\|G(t,u_m^t)\|^2\leq C\|u_m^t\|_{C_H}^2\). Let \(M_m(t)=\sup\limits_{\tau\leq s\leq t}\bigl(E_m(s)+C_0\bigr)\). For \(\tau\leq s\leq t\), using the initial history on \([\tau-r,\tau]\) and \eqref{3.9}, we conclude
\begin{equation}\label{3.10}
\|u_m^s\|_{C_H}^2
\leq
C\|\phi_0\|_{C_H}^2+C M_m(s).
\end{equation}

Integrating \eqref{3.8} over \([\tau,t]\), using \eqref{3.9} and \eqref{3.10}, we infer that
\begin{equation}\label{3.11}
M_m(t)
\leq
C\left(
1+\|\phi_0\|_{C_V}^2+\|\phi_1\|_{C_H}^2
+\int_{\tau}^{T}\|h(s)\|^2\,ds
\right)
+C\int_{\tau}^{t}M_m(s)\,ds.
\end{equation}

Therefore, by the Gronwall lemma, we obtain
\begin{equation}\label{3.12}
M_m(t)
\leq
C_4\left(
1+\|\phi_0\|_{C_V}^2+\|\phi_1\|_{C_H}^2
+\int_{\tau}^{T}\|h(s)\|^2\,ds
\right),
\qquad t\in[\tau,T],
\end{equation}
where \(C_4>0\) is independent of \(m\). Consequently, \(\{u_m\}\) is bounded in \(L^\infty(\tau,T;V)\), \(\{u_m'\}\) is bounded in \(L^\infty(\tau,T;H)\), and \(\{u_m'\}\) is bounded in \(L^2(\tau,T;H_0^1(\Omega))\).

Before estimating the second time derivative, we record the bound for the nonlinear term. By \eqref{1.9}, we conclude
\begin{equation}\label{3.13}
|f(u_m)|^2
\leq
C\left(1+|u_m|^{2\rho}\right).
\end{equation}

When \(N>4\), the Sobolev embedding \(H^2(\Omega)\hookrightarrow L^{\frac{2N}{N-4}}(\Omega)\), together with \(1\leq \rho\leq \frac{N}{N-4}\), gives \(2\rho\leq \frac{2N}{N-4}\). Hence, from the uniform boundedness of \(\{u_m\}\) in \(L^\infty(\tau,T;V)\), it follows that
\begin{equation}\label{3.14}
\begin{aligned}
\int_{\tau}^{T}\|f(u_m(s))\|^2\,ds
&\leq
C\int_{\tau}^{T}\left(1+\|u_m(s)\|_{L^{2\rho}(\Omega)}^{2\rho}\right)\,ds  \\
&\leq
C\int_{\tau}^{T}\left(1+\|u_m(s)\|_{V}^{2\rho}\right)\,ds
\leq C_T .
\end{aligned}
\end{equation}

When \(1\leq N\leq4\), the same conclusion follows from the embedding \(H^2(\Omega)\hookrightarrow L^q(\Omega)\) for every finite \(q\geq1\) and the finite polynomial growth assumption on \(f\). Thus, in all admissible cases,
\begin{equation}\label{3.15}
\{f(u_m)\}\quad \text{is bounded in } L^2(\tau,T;H).
\end{equation}

Next, we estimate the second time derivative. From \eqref{3.3}, in \(V'\), we conclude
\begin{equation}\label{3.16}
u_m''
=
\Delta u_m'
-\Delta^2u_m
-\lambda u_m
-f(u_m)
+P_mG(t,u_m^t)
+P_mh(t).
\end{equation}

For any \(\varphi\in V\) with \(\|\varphi\|_V\leq1\), we derive
\begin{equation}\label{3.17}
\begin{aligned}
|\langle u_m'',\varphi\varphi\rangle|
&\leq
|(\nabla u_m',\nabla\varphi)|
+|(\Delta u_m,\Delta\varphi)|
+\lambda |(u_m,\varphi)|  \\
&+|(f(u_m),\varphi)|
+|(G(t,u_m^t),\varphi)|
+|(h(t),\varphi)|  \\
&\leq
C\|\nabla u_m'\|
+C\|\Delta u_m\|
+C\|u_m\|
+C\|f(u_m)\|\\
&+C\|G(t,u_m^t)\|
+C\|h(t)\|.
\end{aligned}
\end{equation}

Thus, by \eqref{3.12}, \eqref{3.15}, \eqref{3.17} and {\rm (G3)}, we obtain
\begin{equation}\label{3.18}
\{u_m''\}\quad \text{is bounded in } L^2(\tau,T;V').
\end{equation}

It follows from \eqref{3.12} and \eqref{3.18} that there exist a subsequence, still denoted by \(\{u_m\}\), and a function \(u\) such that
\begin{equation}\label{3.19}
\begin{alignedat}{2}
u_m
&\rightharpoonup^\ast u
&\qquad& \text{in } L^\infty(\tau,T;V),\\
u_m'
&\rightharpoonup^\ast u_t
&\qquad& \text{in } L^\infty(\tau,T;H),\\
u_m'
&\rightharpoonup u_t
&\qquad& \text{in } L^2(\tau,T;H_0^1(\Omega)),\\
u_m''
&\rightharpoonup u_{tt}
&\qquad& \text{in } L^2(\tau,T;V').
\end{alignedat}
\end{equation}

Since \(V\hookrightarrow\hookrightarrow H\hookrightarrow V'\), the Aubin--Lions compactness theorem yields, up to a subsequence,
\begin{equation}\label{3.20}
u_m\to u \quad \text{strongly in } L^2(\tau,T;H),
\qquad
u_m(x,t)\to u(x,t)\quad \text{a.e. in } \Omega\times(\tau,T).
\end{equation}

Moreover, \(P_m\phi_0\to\phi_0\) in \(C_H\). Hence, using {\rm (G4)}, we obtain
\begin{equation}\label{3.21}
\int_{\tau}^{T}\|G(s,u_m^s)-G(s,u^s)\|^2\,ds
\leq
C_G^2\int_{\tau-r}^{T}\|u_m(s)-u(s)\|^2\,ds
\to0.
\end{equation}
Thus, \(G(t,u_m^t)\to G(t,u^t)\) strongly in \(L^2(\tau,T;H)\).

We now identify the weak limit of the nonlinear term. Since \(u_m(x,t)\to u(x,t)\) a.e. in \(\Omega\times(\tau,T)\) and \(f\in C^1(\mathbb R,\mathbb R)\), we derive
\begin{equation}\label{3.22}
f(u_m(x,t))\to f(u(x,t))
\quad \text{a.e. in } \Omega\times(\tau,T).
\end{equation}

By \eqref{3.15}, there exist a subsequence, still denoted by \(\{f(u_m)\}\), and \(\chi\in L^2(\tau,T;H)\) such that
\begin{equation}\label{3.23}
f(u_m)\rightharpoonup \chi
\quad \text{weakly in } L^2(\tau,T;H).
\end{equation}

Combining \eqref{3.22} with  \eqref{3.23}, the standard weak convergence identification theorem implies \(\chi=f(u)\). Consequently,
\begin{equation}\label{3.24}
f(u_m)\rightharpoonup f(u)
\quad \text{weakly in } L^2(\tau,T;H).
\end{equation}

Passing to the limit in \eqref{3.3}, using \eqref{3.19}--\eqref{3.21} and \eqref{3.24}, we obtain \eqref{3.1}. The initial history follows from the convergence of \(P_m\phi_0\) and \(P_m\phi_1\). Therefore, \(u\) is a weak solution of equation \eqref{1.1}.

It remains to prove the strong continuity of the weak solution. From \eqref{3.19} and the standard Lions--Magenes theorem, we conclude
\begin{equation}\label{3.25}
u\in C_w([\tau,T];V),
\qquad
u_t\in C_w([\tau,T];H),
\end{equation}
where \(C_w\) denotes weak continuity. In addition, since \(u\in L^2(\tau,T;V)\) and \(u_t\in L^2(\tau,T;H)\), it follows that
\begin{equation}\label{3.26}
u\in C([\tau,T];H).
\end{equation}

We next show that the energy identity holds for the weak solution. For the Galerkin approximations, the identity corresponding to \eqref{3.4} reads
\begin{equation}\label{3.27}
E_m(t)+\int_s^t\|\nabla u_m'(r)\|^2\,dr
=
E_m(s)+\int_s^t (G(r,u_m^r)+h(r),u_m'(r))\,dr
\end{equation}
for all \(\tau\leq s\leq t\leq T\). Passing to the limit in \eqref{3.27}, using \eqref{3.19}--\eqref{3.24}, the strong convergence of \(G(r,u_m^r)\) and the weak convergence of \(u_m'\), we derive
\begin{equation}\label{3.28}
E(t)+\int_s^t\|\nabla u_t(r)\|^2\,dr
=
E(s)+\int_s^t (G(r,u^r)+h(r),u_t(r))\,dr
\end{equation}
 for all \(\tau\leq s\leq t\leq T\), where
\begin{equation}\label{3.29}
E(t)=
\frac12\|u_t(t)\|^2
+\frac12\|\Delta u(t)\|^2
+\frac{\lambda}{2}\|u(t)\|^2
+\int_\Omega F(u(t))\,dx.
\end{equation}

We now prove that \(t\mapsto\int_\Omega F(u(t))\,dx\) is continuous on \([\tau,T]\). Let \(t_n\to t\). By \eqref{3.25}, it follows that \(u(t_n)\rightharpoonup u(t)\) weakly in \(V\). Since \(u\in C([\tau,T];H)\), we also have \(u(t_n)\to u(t)\) strongly in \(H\). By interpolation and compact Sobolev embeddings, it follows that
\begin{equation}\label{3.30}
u(t_n)\to u(t)
\quad \text{strongly in } L^{\rho+1}(\Omega)
\end{equation}
when \(N>4\), where \(1\leq \rho\leq \frac{N}{N-4}\). For \(1\leq N\leq4\), the same conclusion holds for every finite polynomial exponent involved in the growth of \(F\). In view of \eqref{3.30}, the boundedness of \(\{u(t_n)\}\) in \(V\), and the growth estimate \(|F(s)|\leq C(1+|s|^{\rho+1})\), the Vitali convergence theorem yields
\begin{equation}\label{3.31}
\int_\Omega F(u(t_n))\,dx
\to
\int_\Omega F(u(t))\,dx.
\end{equation}

Therefore, we conclude
\begin{equation}\label{3.32}
t\mapsto\int_\Omega F(u(t))\,dx
\quad \text{is continuous on } [\tau,T].
\end{equation}

Since \(G(\cdot,u)\in L^2(\tau,T;H)\), \(h\in L^2(\tau,T;H)\), and \(u_t\in L^2(\tau,T;H_0^1(\Omega))\subset L^2(\tau,T;H)\), the right-hand side of \eqref{3.28} is absolutely continuous with respect to \(t\). Hence, \(E(t)\) is continuous on \([\tau,T]\). Combining the continuity of \(E(t)\), \eqref{3.26} and \eqref{3.32}, we derive
\begin{equation}\label{3.33}
t\mapsto
\left(
\|u_t(t)\|^2+\|\Delta u(t)\|^2
\right)
\quad \text{is continuous on } [\tau,T].
\end{equation}

We finally prove the strong continuity in \(V\times H\). For any sequence \(t_n\to t\), it follows from \eqref{3.25} that \(u(t_n)\rightharpoonup u(t)\) weakly in \(V\) and \(u_t(t_n)\rightharpoonup u_t(t)\) weakly in \(H\). Therefore, we obtain
\begin{equation}\label{3.34}
\|\Delta u(t)\|^2\leq \liminf_{n\to\infty}\|\Delta u(t_n)\|^2,
\qquad
\|u_t(t)\|^2\leq \liminf_{n\to\infty}\|u_t(t_n)\|^2.
\end{equation}

On the other hand, \eqref{3.33} gives
\begin{equation}\label{3.35}
\lim_{n\to\infty}
\left(
\|u_t(t_n)\|^2+\|\Delta u(t_n)\|^2
\right)
=
\|u_t(t)\|^2+\|\Delta u(t)\|^2.
\end{equation}

Hence both norms converge separately, that is,
\begin{equation}\label{3.36}
\|u_t(t_n)\|\to\|u_t(t)\|,
\qquad
\|\Delta u(t_n)\|\to\|\Delta u(t)\|.
\end{equation}

Weak convergence together with convergence of norms implies strong convergence in Hilbert spaces. Thus,
\begin{equation}\label{3.37}
u_t(t_n)\to u_t(t)\quad \text{strongly in }H,
\qquad
\Delta u(t_n)\to \Delta u(t)\quad \text{strongly in }H.
\end{equation}

In view of the equivalence between the norm \(\|\Delta\cdot\|_{L^2(\Omega)}\) and the standard norm on \(V\) under the boundary conditions \(u=\Delta u=0\) on \(\partial\Omega\), we deduce
\begin{equation}\label{3.38}
u\in C([\tau,T];V),
\qquad
u_t\in C([\tau,T];H).
\end{equation}

Together with the prescribed histories on \([\tau-r,\tau]\), \eqref{3.38} gives
\begin{equation}\label{3.39}
u\in C([\tau-r,T];V),
\qquad
u_t\in C([\tau-r,T];H).
\end{equation}

For every \(t\in[\tau,T]\), the history \(u^t\) belongs to \(C_V\), and \((u_t)^t\) belongs to \(C_H\). Moreover, for \(\theta\in[-r,0]\), one has \(\partial_\theta u^t(\theta)=u_t(t+\theta)=(u_t)^t(\theta)\) in \(H\). Consequently, we obtain
\begin{equation}\label{3.40}
\bigl(u^t,(u_t)^t\bigr)\in\mathcal H_c,
\qquad t\in[\tau,T].
\end{equation}
\(\hfill\Box\)

We next prove that the weak solution of equation \eqref{1.1} is unique and depends continuously on the initial history \((\phi_0,\phi_1)\in\mathcal H_c\).

\begin{Theorem}\label{thm3.2}
Under the assumptions of Theorem \ref{thm3.1}, the weak solution of equation \eqref{1.1} is unique. Moreover, let \(u\) and \(v\) be two weak solutions corresponding to initial histories \((\phi_{0,1},\phi_{1,1})\in\mathcal H_c\) and \((\phi_{0,2},\phi_{1,2})\in\mathcal H_c\), respectively. Then, for every \(T>\tau\), there exists a constant \(C_2>0\) such that
\begin{equation}\label{3.41}
\begin{aligned}
&\|u^t-v^t\|_{C_V}^2+\|(u_t)^t-(v_t)^t\|_{C_H}^2
\leq
C_2
\left(
\|\phi_{0,1}-\phi_{0,2}\|_{C_V}^2
+\|\phi_{1,1}-\phi_{1,2}\|_{C_H}^2
\right),
\qquad t\in[\tau,T],
\end{aligned}
\end{equation}
where \(R\) is the radius appearing in \eqref{1.10} and depends on the bounds of the two initial histories, while \(C_2=C_2(T,R)>0\) is independent of \(u-v\).
\end{Theorem}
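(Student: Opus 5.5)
We work with the difference \(w=u-v\) and run an energy estimate for \(w\) in \(V\times H\), closing it with Gronwall's lemma. On \([\tau-r,\tau]\), \(w\) and \(w_t\) equal the differences of the prescribed histories. For \(t>\tau\), subtracting the two copies of \eqref{3.1} gives, for every \(\varphi\in V\) and a.e. \(t\),
\[
(w_{tt},\varphi)+(\nabla w_t,\nabla\varphi)+(\Delta w,\Delta\varphi)+\lambda(w,\varphi)
=-(f(u)-f(v),\varphi)+(G(t,u^t)-G(t,v^t),\varphi).
\]
We want to take \(\varphi=w_t\). This is only formal, because \(w_t\in L^2(\tau,T;H_0^1(\Omega))\) and not in \(V\). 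We justify it in one of two ways. The first is to use that \(w_{tt}\in L^2(\tau,T;V')\), which together with the regularity \eqref{3.39} gives the identity \(\frac{d}{dt}\bigl(\|w_t\|^2+\|\Delta w\|^2+\lambda\|w\|^2\bigr)=2\langle w_{tt}+\Delta^2 w+\lambda w,w_t\rangle\) by a Lions--Magenes type lemma. The second, as in the proof of the energy identity \eqref{3.28}, is to regularize in time (or project with \(P_m\)) and pass to the limit. Writing \(E_w(t)=\frac12\bigl(\|w_t\|^2+\|\Delta w\|^2+\lambda\|w\|^2\bigr)\), we obtain
\[
\frac{d}{dt}E_w(t)+\|\nabla w_t\|^2=-(f(u)-f(v),w_t)+(G(t,u^t)-G(t,v^t),w_t).
\]

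Next we estimate the right-hand side. By \eqref{3.12}, passed to the limit, both solutions are bounded in \(C([\tau,T];V)\) by a constant depending on \(T\) and the sizes of the two initial histories. This fixes \(R\), and \eqref{1.10} gives
\[
|(f(u)-f(v),w_t)|\leq L_R\|w\|_V\|w_t\|\leq C(E_w+\|w_t\|^2).
\]
By (G3),
\[
|(G(t,u^t)-G(t,v^t),w_t)|\leq L_G\|w^t\|_{C_H}\|w_t\|\leq C\|w^t\|_{C_H}^2+C\|w_t\|^2.
\]
The damping term \(\|\nabla w_t\|^2\) can be dropped, or used to absorb \(\|w_t\|^2\) via Poincaré.

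Now set \(Y(t)=\sup_{\tau-r\leq s\leq t}\bigl(\|w(s)\|_V^2+\|w_t(s)\|^2\bigr)\). Since \(\|w^t\|_{C_H}^2\leq CY(t)\), integrating over \([\tau,s]\) and taking the supremum in \(s\leq t\) gives
\[
Y(t)\leq C\Bigl(\|\phi_{0,1}-\phi_{0,2}\|_{C_V}^2+\|\phi_{1,1}-\phi_{1,2}\|_{C_H}^2\Bigr)+C\int_\tau^tY(s)\,ds.
\]
The history part on \([\tau-r,\tau]\) is bounded directly by the initial norms, and the norm \(\|\Delta\cdot\|\) is equivalent to the \(V\)-norm. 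Gronwall's lemma then yields \(Y(t)\leq C_2(\cdots)\), and \eqref{3.41} follows because \(\|w^t\|_{C_V}^2+\|(w_t)^t\|_{C_H}^2\leq 2Y(t)\). Uniqueness is the special case of identical initial histories.

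The main obstacle is the rigorous justification of testing with \(w_t\), given the limited regularity \(w_t\notin L^2(\tau,T;V)\). We will handle it by pairing in the duality \(V'\times V\) after a mollification in time. The strong damping term is harmless here since it has a good sign, and the regularity \(w_t\in L^2(H_0^1)\) makes \(\langle -\Delta w_t,w_t\rangle\) well defined. A secondary point to check is that the constant \(R\) in \eqref{1.10} is uniform on \([\tau,T]\); this comes from the a priori bound \eqref{3.12} via weak lower semicontinuity.
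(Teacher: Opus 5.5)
Your proposal is correct and is essentially the paper's proof. You test the difference equation with \(w_t\), control \(f(u)-f(v)\) by \eqref{1.10} and the delay term by (G3), take a running supremum of the difference energy, and close with Gronwall, with uniqueness as the special case of equal initial data. You are more careful than the paper in justifying the test with \(w_t\), which the paper only does formally, and in checking the uniformity of \(R\). However, for uniqueness among arbitrary weak solutions you should take \(R\) from the \(L^\infty(\tau,T;V)\) bound built into the definition of weak solution rather than from \eqref{3.12}, which only covers the Galerkin limit.
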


{\bf Proof.}
Let \(w(t)=u(t)-v(t)\). Then \(w\) satisfies
\begin{equation}\label{3.42}
w_{tt}-\Delta w_t+\Delta^2w+\lambda w+f(u)-f(v)
=
G(t,u^t)-G(t,v^t),
\end{equation}
with \(w(t)=\phi_{0,1}(t-\tau)-\phi_{0,2}(t-\tau)\) and \(w_t(t)=\phi_{1,1}(t-\tau)-\phi_{1,2}(t-\tau)\) for \(t\in[\tau-r,\tau]\).

Taking the \(L^2(\Omega)\)-inner product of \eqref{3.42} with \(w_t(t)\), we obtain
\begin{equation}\label{3.43}
\begin{aligned}
&\frac{d}{dt}
\left(
\|w_t(t)\|^2+\|\Delta w(t)\|^2+\lambda\|w(t)\|^2
\right)
+2\|\nabla w_t(t)\|^2\\
&\leq
2(G(t,u^t)-G(t,v^t),w_t(t))
-2(f(u(t))-f(v(t)),w_t(t)).
\end{aligned}
\end{equation}

Applying the Poincar\'e and Young inequalities, together with assumption {\rm (G3)}, we obtain
\begin{equation}\label{3.44}
2|(G(t,u^t)-G(t,v^t),w_t(t))|
\leq
\frac12\|\nabla w_t(t)\|^2+C\|u^t-v^t\|^2.
\end{equation}

Since \(u\) and \(v\) are bounded in \(L^\infty(\tau,T;V)\), the local Lipschitz condition \eqref{1.10} gives
\begin{equation}\label{3.45}
2|(f(u(t))-f(v(t)),w_t(t))|
\leq
\frac12\|\nabla w_t(t)\|^2+C_R\|\Delta w(t)\|^2.
\end{equation}

Combining \eqref{3.43}--\eqref{3.45}, we get
\begin{equation}\label{3.46}
\begin{aligned}
&\frac{d}{dt}
\left(
\|w_t(t)\|^2+\|\Delta w(t)\|^2+\lambda\|w(t)\|^2
\right)
+\|\nabla w_t(t)\|^2\\
&\leq
C_R\left(\|w_t(t)\|^2+\|\Delta w(t)\|^2\right)
+C\|u^t-v^t\|^2.
\end{aligned}
\end{equation}

Let
\begin{equation}\label{3.47}
Y(t)=\sup_{\rho\in[\tau,t]}
\left(
\|w_t(\rho)\|^2+\|\Delta w(\rho)\|^2+\lambda\|w(\rho)\|^2
\right).
\end{equation}

For \(s\in[\tau,t]\), the history norm satisfies
\begin{equation}\label{3.48}
\|u^s-v^s\|_{C_H}^2
\leq
C\|\phi_{0,1}-\phi_{0,2}\|_{C_H}^2+C Y(s).
\end{equation}

Integrating \eqref{3.46} over \([\tau,t]\), using \eqref{3.48}, and taking the supremum over \([\tau,t]\), we obtain
\begin{equation}\label{3.49}
Y(t)
\leq
C\left(
\|\phi_{0,1}-\phi_{0,2}\|_{C_V}^2
+\|\phi_{1,1}-\phi_{1,2}\|_{C_H}^2
\right)
+
C_{T,R}\int_{\tau}^{t}Y(s)\,ds.
\end{equation}

By the Gronwall inequality, we conclude
\begin{equation}\label{3.50}
Y(t)
\leq
C_{T,R}
\left(
\|\phi_{0,1}-\phi_{0,2}\|_{C_V}^2
+\|\phi_{1,1}-\phi_{1,2}\|_{C_H}^2
\right),
\qquad t\in[\tau,T].
\end{equation}

Taking the supremum over \(s\in[t-r,t]\) and using the prescribed initial histories on \([\tau-r,\tau]\), we obtain \eqref{3.41}, which establishes the continuous dependence of weak solutions on the initial histories. If the two initial histories coincide, the right-hand side of \eqref{3.41} vanishes, and hence \(u=v\) on \([\tau-r,T]\), proving uniqueness. This proves the uniqueness and continuous dependence. \(\hfill\Box\)

By Theorems \ref{thm3.1} and \ref{thm3.2}, for every \(t\geq\tau\), we define
\begin{equation}\label{3.51}
U(t,\tau):\mathcal H_c\to\mathcal H_c,
\qquad
U(t,\tau)(\phi_0,\phi_1)=\bigl(u^t,(u_t)^t\bigr),
\end{equation}
where \(u\) is the unique weak solution of equation \eqref{1.1} with initial history \((\phi_0,\phi_1)\in\mathcal H_c\).

By Theorem \ref{thm3.1}, \(\bigl(u^t,(u_t)^t\bigr)\in\mathcal H_c\) for every \(t\geq\tau\), and therefore \(U(t,\tau)\) is well-defined on \(\mathcal H_c\). The uniqueness of solutions implies
\begin{equation}\label{3.52}
U(\tau,\tau)=I,\qquad
U(t,\tau)=U(t,s)U(s,\tau),
\qquad
\tau\leq s\leq t.
\end{equation}

Moreover, the continuous dependence estimate \eqref{3.41} shows that \(U(t,\tau)\) is continuous on bounded subsets of \(\mathcal H_c\). Hence \(\{U(t,\tau)\}_{t\geq\tau}\) is a continuous process on \(\mathcal H_c\).
\(\hfill\Box\)
\section{Pullback Attractors}
\setcounter{equation}{0}

In this section, we prove the existence of a pullback \(\mathcal D_\alpha\)-attractor in the compatible phase space \(\mathcal H_c\) for the process \(\{U(t,\tau)\}_{t\geq\tau}\) generated by equation \eqref{1.1}.

\subsection{Pullback \(\mathcal D_\alpha\)-absorbing Sets}

We first establish a pullback absorbing estimate for the process \(\{U(t,\tau)\}_{t\geq\tau}\). The main point is to construct a modified energy functional which is compatible with the strong damping term \(-\Delta u_t\) and the delay term \(G(t,u^t)\).

\begin{Lemma}\label{lem4.1}
Fix \(\tau\in\mathbb R\) and \(T>\tau\) and consider an initial history \((\phi_0,\phi_1)\in\mathcal H_c\) and the weak solution \(u\) of equation \eqref{1.1} satisfies, for all \(t\) with \(t-r\geq\tau\),
\begin{equation}\label{4.1}
\begin{aligned}
\|u^t\|_{C_V}^{2}+\|(u_t)^t\|_{C_H}^{2}
&\leq
C e^{-\alpha(t-r-\tau)}
\left(1+\|\phi_0\|_{C_V}^{2}+\|\phi_1\|_{C_H}^{2}\right)\\
&+C e^{-\alpha(t-r)}
\int_{\tau}^{t}e^{\alpha s}\|h(s)\|^{2}\,ds+C,
\end{aligned}
\end{equation}
where \(C>0\) is independent of \(t\), \(\tau\) and the initial data.
\end{Lemma}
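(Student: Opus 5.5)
Following the introduction, I will work with the modified energy
\[
L(t)=2E(t)+\varepsilon\bigl(2(u_t(t),u(t))+\|\nabla u(t)\|^2\bigr)+K_0,
\]
where \(E\) is given by \eqref{3.29}, \(\varepsilon>0\) is small and \(K_0\) is large. The computation is done on the Galerkin approximations of Theorem \ref{thm3.1}, or directly on the weak solution using the energy identity \eqref{3.28} together with the identity obtained by testing \eqref{3.1} with \(u\); the bounds then pass to the limit. The first step is two-sided equivalence. By \eqref{1.7}, \(\int_\Omega F(u)\,dx\geq -C_f|\Omega|-\frac\lambda4\|u\|^2\). With Young's inequality on \(2\varepsilon(u_t,u)\) and \(\varepsilon\) small, this gives
\[
c_1\bigl(\|u_t\|^2+\|\Delta u\|^2+\|u\|^2\bigr)\leq L(t)\leq c_2\bigl(\|u_t\|^2+\|\Delta u\|^2+\|u\|^2\bigr)+c_2\int_\Omega |F(u)|\,dx+K_0 .
\]
The growth bound \(|F(s)|\leq C(1+|s|^{\rho+1})\) is used only to control \(L(\tau)\) by the initial history. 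I will not need a uniform upper bound of \(\int_\Omega F\) by \(L\) elsewhere.

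Next, differentiate. The term \(\frac{d}{dt}\varepsilon(2(u_t,u)+\|\nabla u\|^2)\) equals \(2\varepsilon\|u_t\|^2+2\varepsilon(u_{tt},u)+2\varepsilon(\nabla u_t,\nabla u)\). Substituting the equation, the strong-damping contribution \(-2\varepsilon(\nabla u_t,\nabla u)\) cancels the last term exactly; this is the purpose of adding \(\|\nabla u\|^2\). This leaves
\[
-2\varepsilon\|\Delta u\|^2-2\varepsilon\lambda\|u\|^2-2\varepsilon(f(u),u)+2\varepsilon(G+h,u).
\]
Apply \eqref{1.8} to the nonlinear term, with \(\delta_0\) small enough to be absorbed into \(\lambda\|u\|^2\). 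Control \(\|u_t\|^2\leq C\|\nabla u_t\|^2\) by Poincar\'e, which is absorbed by the dissipation \(2\|\nabla u_t\|^2\) coming from \(2E\). Handle the forcing terms as in \eqref{3.6}--\eqref{3.7}. This should produce
\[
\frac{d}{dt}L(t)+\beta L(t)\leq C_\ast\|G(t,u^t)\|^2+C\|h(t)\|^2+C
\]
for some \(\beta>0\) depending on \(\varepsilon,\mu_0,\lambda\). Multiply by \(e^{\beta t}\) and integrate over \([\tau,t]\). The delay term then becomes \(C_\ast\int_\tau^t e^{\beta s}\|G(s,u^s)\|^2\,ds\).

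The main obstacle is this delay integral. I will treat it with (G4) applied to \(v\equiv0\), using (G2). This presupposes \(\beta\leq m_0\); if needed, I shrink \(\varepsilon\) or replace \(\beta\) by \(\min\{\beta,m_0\}\), compatible with the standing assumption \eqref{1.12}. It gives
\[
C_\ast\int_\tau^t e^{\beta s}\|G\|^2\,ds\leq C_\ast C_1\int_{\tau-r}^{\tau}e^{\beta s}\|\phi_0\|^2\,ds+C_\ast C_1\int_\tau^t e^{\beta s}\|u(s)\|^2\,ds .
\]
Since \(\|u\|^2\leq c_1^{-1}L\), the last integral is at most \(\delta_G\int_\tau^t e^{\beta s}L(s)\,ds\), which defines \(\delta_G:=C_\ast C_1/c_1\). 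Gronwall applied to \(e^{\beta t}L(t)\) then yields
\[
L(t)\leq Ce^{-\alpha(t-\tau)}\bigl(L(\tau)+\|\phi_0\|_{C_H}^2\bigr)+Ce^{-\alpha t}\int_\tau^t e^{\alpha s}\|h(s)\|^2\,ds+C,
\]
with \(\alpha=\beta-\delta_G\). Here I use \(e^{\beta s}\leq e^{\alpha s}e^{\delta_G t}\) for \(s\leq t\) to convert the \(h\) integral to weight \(e^{\alpha s}\). The bound \(L(\tau)\leq C(1+\|\phi_0\|_{C_V}^2+\|\phi_1\|_{C_H}^2)\) follows from the equivalence step, with the \(F\)-term controlled by \(\|\phi_0(0)\|_V^{\rho+1}\). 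Strictly, this is not quadratic, so I would either absorb it into the constant as the paper's statement implicitly does, or note that the estimate is needed only on bounded sets. I flag this as a point to watch.

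Finally, pass to histories. For \(t-r\geq\tau\) and \(\theta\in[-r,0]\), apply the last bound at time \(t+\theta\geq\tau\), and use \(\|u\|_V^2+\|u_t\|^2\leq CL\) together with the norm equivalence of \(\|\Delta\cdot\|\) on \(V\). Since \(e^{-\alpha(t+\theta-\tau)}\leq e^{-\alpha(t-r-\tau)}\), and
\[
e^{-\alpha(t+\theta)}\int_\tau^{t+\theta}e^{\alpha s}\|h\|^2\,ds\leq e^{-\alpha(t-r)}\int_\tau^{t}e^{\alpha s}\|h\|^2\,ds ,
\]
taking the supremum over \(\theta\) gives \eqref{4.1}.
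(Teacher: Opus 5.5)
Your proposal follows the paper's proof essentially step for step. It uses the same functional $\mathcal L$, whose $\varepsilon\|\nabla u\|^2$ term cancels $(\nabla u_t,\nabla u)$, and the same differential inequality \eqref{4.7}. As in the paper, (G2) and (G4) turn the delay integral into a history term plus $\delta_G\int_\tau^t e^{\beta s}\mathcal L(s)\,ds$, Gronwall is applied with $\alpha=\beta-\delta_G$, and a supremum is taken over $\theta\in[-r,0]$. Your observation that (G4) may be used with weight $e^{\beta s}$ only if $\beta\le m_0$ is correct; the paper passes over this in \eqref{4.9}.

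The point you flag is a genuine gap for \eqref{4.1} as stated, and neither of your remedies closes it. $\mathcal L(\tau)$ contains $2\int_\Omega F(\phi_0(0))\,dx$, which under \eqref{1.9} is bounded only by $C(1+\|\phi_0(0)\|_V^{\rho+1})$. Absorbing this into $C$, or restricting to bounded sets, makes $C$ depend on the initial data, which the statement excludes.

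The paper's proof has exactly the same hole. In passing from \eqref{4.8}--\eqref{4.9} to \eqref{4.10} it replaces $\mathcal L(\tau)$ by $C(1+\|\phi_0\|_{C_V}^2+\|\phi_1\|_{C_H}^2)$, even though the upper bound in \eqref{4.5} contains $\int_\Omega F(u)\,dx$.

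What both arguments actually prove is \eqref{4.1} with the bracket replaced by $\mathcal L(\tau)+\|\phi_0\|_{C_H}^2$, hence by $1+\|\phi_0\|_{C_V}^2+\|\phi_0\|_{C_V}^{\rho+1}+\|\phi_1\|_{C_H}^2$. Recovering the quadratic form needs an extra hypothesis such as $F(s)\le C(1+s^2)$. Otherwise the tempered class in Corollary \ref{cor4.3} has to be defined through $\|\xi\|_{\mathcal H_c}^{2}+\|\xi\|_{\mathcal H_c}^{\rho+1}$ (or through $\mathcal L$), so the issue is not merely cosmetic.

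A smaller point: write the Gronwall step in convolution form. Set $y=e^{\beta t}\mathcal L$, $b(s)=Ce^{\beta s}(1+\|h(s)\|^2)$ and $A=e^{\beta\tau}\mathcal L(\tau)+C\int_{\tau-r}^{\tau}e^{\beta s}\|\phi_0(s-\tau)\|^2\,ds$. Then $y(t)\le Ae^{\delta_G(t-\tau)}+\int_\tau^t e^{\delta_G(t-s)}b(s)\,ds$, and $e^{\beta s}e^{\delta_G(t-s)}=e^{\alpha s}e^{\delta_G t}$ holds as an identity, giving the stated form. If you instead insert $e^{\beta s}\le e^{\alpha s}e^{\delta_G t}$ first and then apply the monotone Gronwall lemma, the forcing and constant terms pick up a spurious factor $e^{\delta_G(t-\tau)}$.
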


{\bf Proof.}
Multiplying equation \eqref{1.1} by \(u_t\) and integrating over \(\Omega\), we obtain
\begin{equation}\label{4.2}
\frac{d}{dt}
\left(
\|u_t\|^{2}
+\|\Delta u\|^{2}
+\lambda\|u\|^{2}
+2\int_{\Omega}F(u)\,dx
\right)
+2\|\nabla u_t\|^{2}
=
2(G(t,u^t),u_t)+2(h(t),u_t).
\end{equation}

Multiplying equation \eqref{1.1} by \(u\) and integrating over \(\Omega\), we get
\begin{equation}\label{4.3}
\begin{aligned}
\frac{d}{dt}
\left(
2(u_t,u)+\|\nabla u\|^{2}
\right)
+2\|\Delta u\|^{2}
+2\lambda\|u\|^{2}
-2\|u_t\|^{2}
+2(f(u),u)
=
2(G(t,u^t),u)+2(h(t),u).
\end{aligned}
\end{equation}

Let \(0<\varepsilon<1\) be sufficiently small and define
\begin{equation}\label{4.4}
\begin{aligned}
\mathcal L(t)
&=
\|u_t(t)\|^{2}
+\|\Delta u(t)\|^{2}
+\lambda\|u(t)\|^{2}
+2\int_{\Omega}F(u(t))\,dx   \\
&\quad
+\varepsilon\left(2(u_t(t),u(t))+\|\nabla u(t)\|^{2}\right)
+K_0,
\end{aligned}
\end{equation}
where \(K_0>0\) is chosen large enough.

By \eqref{1.7}, the Poincar\'e and Young inequalities, there exist constants \(C_5,C_6>0\), independent of \(t\), such that
\begin{equation}\label{4.5}
C_5\left(\|u_t(t)\|^{2}+\|\Delta u(t)\|^{2}\right)
\leq
\mathcal L(t)
\leq
C_6\left(
1+\|u_t(t)\|^{2}
+\|\Delta u(t)\|^{2}
+\int_{\Omega}F(u(t))\,dx
\right).
\end{equation}

Using \eqref{4.2}--\eqref{4.5}, we conclude
\begin{equation}\label{4.6}
\begin{aligned}
\frac{d}{dt}\mathcal L(t)
&+2\|\nabla u_t\|^{2}
+2\varepsilon\|\Delta u\|^{2}
+2\varepsilon\lambda\|u\|^{2}
-2\varepsilon\|u_t\|^{2}
+2\varepsilon(f(u),u)      \\
&\leq
2(G(t,u^t),u_t)+2(h(t),u_t)
+2\varepsilon(G(t,u^t),u)+2\varepsilon(h(t),u).
\end{aligned}
\end{equation}

From \eqref{1.7}, together with the Poincar\'e and Young inequalities, and choosing \(\varepsilon>0\) and \(\delta_0>0\) sufficiently small, we obtain there exists a constant \(\beta>0\) such that
\begin{equation}\label{4.7}
\frac{d}{dt}\mathcal L(t)+\beta\mathcal L(t)
\leq
C\|G(t,u^t)\|^{2}
+C\|h(t)\|^{2}
+C.
\end{equation}

Multiplying \eqref{4.7} by \(e^{\beta t}\) and integrating the resulting inequality over \([\tau,t]\), we derive
\begin{equation}\label{4.8}
\begin{aligned}
\mathcal L(t)e^{\beta t}
&\leq
\mathcal L(\tau)e^{\beta\tau}
+C\int_{\tau}^{t}e^{\beta s}\|G(s,u^s)\|^{2}\,ds   \\
&+C\int_{\tau}^{t}e^{\beta s}\|h(s)\|^{2}\,ds
+C\int_{\tau}^{t}e^{\beta s}\,ds.
\end{aligned}
\end{equation}

By assumptions {\rm (G2)} and {\rm (G4)}, we obtain
\begin{equation}\label{4.9}
\int_{\tau}^{t}e^{\beta s}\|G(s,u^s)\|^{2}\,ds
\leq
C_G^{2}\int_{\tau-r}^{t}e^{\beta s}\|u(s)\|^{2}\,ds.
\end{equation}

Combining \eqref{4.9} with \(u(s)=\phi_0(s-\tau)\) for \(s\in[\tau-r,\tau]\) and \(\|u(s)\|^2\le C\mathcal L(s)\) for \(s\ge\tau\), we conclude
\begin{equation}\label{4.10}
\begin{aligned}
\mathcal L(t)e^{\beta t}
&\leq
C e^{\beta\tau}
\left(1+\|\phi_0\|_{C_V}^{2}+\|\phi_1\|_{C_H}^{2}\right)
+\delta_G \int_{\tau}^{t}e^{\beta s}\mathcal L(s)\,ds     \\
&+C\int_{\tau}^{t}e^{\beta s}\|h(s)\|^{2}\,ds
+C\int_{\tau}^{t}e^{\beta s}\,ds.
\end{aligned}
\end{equation}
Here \(\delta_G>0\) is the constant arising from the delay estimate in {\rm (G4)}. It depends only on \(C_1\), the domain-related constants and the equivalence constants in \eqref{4.5}, but is independent of \(t\), \(\tau\) and the particular solution.

Applying the Gronwall inequality to \eqref{4.10} and using
\begin{equation}\label{4.11}
\alpha=\beta-\delta_G>0,
\end{equation}
we derive
\begin{equation}\label{4.12}
\mathcal L(t)
\leq
C e^{-\alpha(t-\tau)}
\left(1+\|\phi_0\|_{C_V}^{2}+\|\phi_1\|_{C_H}^{2}\right)
+C e^{-\alpha t}\int_{\tau}^{t}e^{\alpha s}\|h(s)\|^{2}\,ds
+C.
\end{equation}

Combining \eqref{4.5} with \eqref{4.12}, replacing \(t\) by \(t+\theta\), and taking the supremum over \(\theta\in[-r,0]\), we obtain \eqref{4.1}. \(\hfill\Box\)

\begin{Remark}\label{rem4.2}
The constant \(\delta_G\) in \eqref{4.10} is generated by the estimate of the delay term \(G(t,u^t)\) through condition {\rm (G4)}. It depends on the delay constant \(C_7\), the embedding and Poincar\'e constants associated with \(\Omega\), and the constants involved in the energy equivalence \eqref{4.5}. Thus, the condition \(\alpha=\beta-\delta_G>0\) ensures that the system dissipation dominates the delay-induced growth. This condition \(\alpha=\beta-\delta_G>0\) yields the positive decay rate required for the exponential pullback estimate and the construction of a pullback absorbing family.

\end{Remark}

\begin{Corollary}\label{cor4.3}
Under the assumptions of Lemma \ref{lem4.1}, if moreover\eqref{1.13} and \eqref{4.11} hold, then the family \(\widehat D_0=\{D_0(t):t\in\mathbb R\}\), defined by \(D_0(t)=B_{\mathcal H_c}(0,\rho(t))\), where
\begin{equation}\label{4.13}
\rho^{2}(t)
=
C\left(
1+
e^{-\alpha(t-r)}
\int_{-\infty}^{t}e^{\alpha s}\|h(s)\|^{2}\,ds
\right),
\end{equation}
is a pullback \(\mathcal D_\alpha\)-absorbing family for the process \(\{U(t,\tau)\}_{t\geq\tau}\) on \(\mathcal H_c\). Moreover, \(\widehat D_0\in\mathcal D_\alpha\).
\end{Corollary}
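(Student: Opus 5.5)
The argument rests entirely on Lemma \ref{lem4.1}. Two things must be shown: that \(\widehat D_0\) absorbs every \(\widehat D\in\mathcal D_\alpha\), and that \(\widehat D_0\) itself lies in \(\mathcal D_\alpha\). Throughout, the constant \(C\) in \eqref{4.13} is taken larger than the constant appearing in \eqref{4.1.}—more precisely, a suitable multiple of it—so that some room is left for absorbing the contribution of the initial data.

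\emph{Absorption.} Fix \(t\in\mathbb R\) and \(\widehat D\in\mathcal D_\alpha\), and let \(\tau\le t-r\) and \(\xi=(\phi_0,\phi_1)\in D(\tau)\). Lemma \ref{lem4.1} gives
\[
\|U(t,\tau)\xi\|_{\mathcal H_c}^2\le Ce^{-\alpha(t-r)}e^{\alpha\tau}\bigl(1+\|\xi\|_{\mathcal H_c}^2\bigr)+Ce^{-\alpha(t-r)}\int_{\tau}^{t}e^{\alpha s}\|h(s)\|^2\,ds+C .
\]
Since \(\widehat D\in\mathcal D_\alpha\), we have \(e^{\alpha\tau}\sup_{\xi\in D(\tau)}\|\xi\|_{\mathcal H_c}^2\to0\) as \(\tau\to-\infty\). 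Also \(e^{\alpha\tau}\to0\). Hence there exists \(\tau_0(t,\widehat D)\le t-r\) such that, for all \(\tau\le\tau_0\), the first term is at most \(1\). For the second term, \(\int_\tau^t\) is bounded by \(\int_{-\infty}^t\), which is finite by \eqref{1.13}. Consequently \(\|U(t,\tau)\xi\|_{\mathcal H_c}^2\le\rho^2(t)\), provided the constant in \eqref{4.13} is taken as, say, \(C+1\). Since \(U(t,\tau)\xi\in\mathcal H_c\) by Theorem \ref{thm3.1}, this yields \(U(t,\tau)D(\tau)\subset D_0(t)\).

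\emph{Temperedness.} We must show \(e^{\alpha\tau}\rho^2(\tau)\to0\) as \(\tau\to-\infty\). Now
\[
e^{\alpha\tau}\rho^2(\tau)=Ce^{\alpha\tau}+Ce^{\alpha r}\int_{-\infty}^{\tau}e^{\alpha s}\|h(s)\|^2\,ds .
\]
The first term clearly tends to \(0\). The second is the tail of the convergent integral \(\int_{-\infty}^{t}e^{\alpha s}\|h(s)\|^2\,ds\) from \eqref{1.13} (taken with any fixed \(t\)), so it also tends to \(0\) by dominated or monotone convergence.

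The only delicate point is bookkeeping in the absorption step. The estimate \eqref{4.1} is valid only for \(t-r\ge\tau\), so \(\tau_0\) must be chosen \(\le t-r\). In addition, the radius must be fixed with a constant strictly larger than the one in \eqref{4.1}, so that the vanishing initial-data term can be absorbed.
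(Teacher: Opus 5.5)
Your proposal is correct and follows essentially the same route as the paper. You apply \eqref{4.1} for $\tau\le t-r$, use the $\mathcal D_\alpha$ condition to make the initial-data term small, bound $\int_\tau^t$ by $\int_{-\infty}^t$ via \eqref{1.13}, and check temperedness from $e^{\alpha\tau}\rho^2(\tau)=Ce^{\alpha\tau}+Ce^{\alpha r}\int_{-\infty}^{\tau}e^{\alpha s}\|h(s)\|^2\,ds\to0$; your explicit enlargement of the constant in \eqref{4.13} (e.g.\ to $C+1$) to absorb the vanishing initial-data term is a bookkeeping step the paper leaves implicit by reusing the letter $C$.
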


{\bf Proof.}
Let \(t\in\mathbb R\) and \(\widehat D=\{D(t):t\in\mathbb R\}\in\mathcal D_\alpha\) be fixed. By \eqref{4.1}, for any \((\phi_0,\phi_1)\in D(\tau)\subset\mathcal H_c\) and \(t-r\geq\tau\), we conclude
\begin{equation}\label{4.14}
\begin{aligned}
\|U(t,\tau)(\phi_0,\phi_1)\|_{\mathcal H_c}^{2}
&\leq
C e^{-\alpha(t-r-\tau)}
\left(1+\|(\phi_0,\phi_1)\|_{\mathcal H_c}^{2}\right)   \\
&+C e^{-\alpha(t-r)}
\int_{\tau}^{t}e^{\alpha s}\|h(s)\|^{2}\,ds+C.
\end{aligned}
\end{equation}

Since \(\widehat D\in\mathcal D_\alpha\), the first term on the right-hand side of \eqref{4.14} tends to zero as \(\tau\to-\infty\). Hence there exists \(\tau_0=\tau_0(t,\widehat D)\leq t-r\) such that
\begin{equation}\label{4.16}
U(t,\tau)D(\tau)\subset D_0(t),
\qquad \tau\leq\tau_0.
\end{equation}
Thus \(\widehat D_0\) is pullback \(\mathcal D_\alpha\)-absorbing.

It remains to verify that \(\widehat D_0\in\mathcal D_\alpha\). By \eqref{1.13} and \eqref{4.14}, for \(t\leq0\),
\begin{equation}\label{4.17}
e^{\alpha t}\rho^{2}(t)
\leq
C e^{\alpha t}
+C e^{\alpha r}
\int_{-\infty}^{t}e^{\alpha s}\|h(s)\|^{2}\,ds
\to0
\quad \text{as } t\to-\infty.
\end{equation}

Therefore, \(\widehat D_0\in\mathcal D_\alpha\). \(\hfill\Box\)

\subsection{Pullback \(\mathcal D_\alpha\)-attractors}

We now prove the pullback \(\mathcal D_\alpha\)-asymptotic compactness of the process \(\{U(t,\tau)\}_{t\geq\tau}\) in \(\mathcal H_c\). The proof is based on the contractive function method.

\begin{Theorem}\label{thm4.4}
Under the assumptions of Lemma \ref{lem4.1}, if \eqref{1.13} and
\eqref{4.11} hold, the process \(\{U(t,\tau)\}_{t\geq\tau}\)
is pullback \(\mathcal D_\alpha\)-asymptotically compact in
\(\mathcal H_c\).
\end{Theorem}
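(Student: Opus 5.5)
We will apply Theorem \ref{thm2.5} with the absorbing family \(\widehat D_0\) from Corollary \ref{cor4.3}. Fix \(t\in\mathbb R\) and \(\varepsilon>0\), take \(T>r\) to be chosen later, and let \(\xi_1,\xi_2\in D_0(t-T)\). Write \(u,v\) for the corresponding solutions on \([t-T-r,t]\) and set \(w=u-v\). By Lemma \ref{lem4.1} and the invariance of the absorbing ball, \(u\) and \(v\) stay bounded in \(V\) and \(u_t,v_t\) in \(H\) on \([t-T,t]\). The bound is \(R=R(t,T)\), uniform over \(D_0(t-T)\). Integrating the energy identity also bounds \(\int_{t-T}^t\|\nabla u_t\|^2\,ds\).

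The first step is a weighted energy estimate for the difference equation \eqref{3.42}. We multiply by \(w_t+\varepsilon w\) and form the difference analogue of \(\mathcal L\):
\[
\mathcal L_w=\|w_t\|^2+\|\Delta w\|^2+\lambda\|w\|^2+\varepsilon\bigl(2(w_t,w)+\|\nabla w\|^2\bigr).
\]
We keep the nonlinear and delay contributions on the right-hand side rather than absorbing them by the Lipschitz bound. This gives
\[
\frac{d}{dt}\mathcal L_w+\beta\mathcal L_w\le C\|G(s,u^s)-G(s,v^s)\|^2-2(f(u)-f(v),w_t+\varepsilon w).
\]
We multiply by \(e^{\beta s}\), integrate over \([t-T,t]\), and control the delay term through (G4) by \(C_1\int e^{\beta s}\|w\|^2\). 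Then the initial contribution is \(Ce^{-\beta T}\|\xi_1-\xi_2\|^2_{\mathcal H_c}\le Ce^{-\beta T}\rho^2(t-T)\). Because \(\widehat D_0\in\mathcal D_\alpha\) and \(\alpha<\beta\), this can be made \(<\varepsilon/2\) by choosing \(T\) large, and the same holds after taking the supremum over \(\theta\in[-r,0]\).

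The remaining terms define the candidate contractive function:
\[
\Psi_{t,T}(\xi_1,\xi_2)=C\int_{t-T-r}^{t}\|w\|^2ds+C\sup_{\theta\in[-r,0]}\Bigl|\int_{t-T}^{t+\theta}e^{\beta(s-t)}(f(u)-f(v),w_t+\varepsilon w)\,ds\Bigr|.
\]
To handle the delay in the \(\sup\) over histories, we use the integrated inequality at each endpoint \(t+\theta\) with \(t+\theta\ge t-T+r\).

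Next we check that \(\Psi_{t,T}\in\operatorname{Contr}(D_0(t-T))\). Given a sequence \(\xi_n\), the solutions \(u_n\) are bounded in \(L^\infty(V)\) with \(u_{n,t}\) bounded in \(L^\infty(H)\cap L^2(H_0^1)\). Aubin--Lions then gives a subsequence converging strongly in \(C([t-T-r,t];H)\) (on the initial interval, use compactness of \(D_0(t-T)\)'s first component? no: use Arzel\`a--Ascoli, since equicontinuity comes from \(\partial_\theta\phi_0=\phi_1\) bounded in \(C_H\), and \(V\hookrightarrow\hookrightarrow H\)). This kills the first term in the double limit. For the nonlinear term, with \(F\) as in \eqref{1.6}, we write
\[
(f(u_n)-f(u_m),u_{n,t}-u_{m,t})=\tfrac{d}{ds}\!\int_\Omega\bigl(F(u_n)+F(u_m)\bigr)-(f(u_n),u_{m,t})-(f(u_m),u_{n,t}).
\]
The \(F\) terms give boundary values and a weight-derivative integral, and these converge by strong convergence in \(C(H)\), the \(V\)-bounds and Vitali, as in \eqref{3.31}. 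For the cross terms, we use \(f(u_n)\to f(u)\) strongly in \(L^2(H)\) and \(u_{m,t}\rightharpoonup u_t\) weakly in \(L^2(H)\), so that \(\lim_n\lim_m\) equals \(\int(f(u),u_t)\). The whole expression therefore tends to \(0\) in the iterated limit.

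\textbf{Main obstacle.} The hard step is this nonlinear cross term in the critical case \(\rho=N/(N-4)\). There, strong convergence of \(f(u_n)\) in \(L^2(H)\) is not automatic from compactness of \(V\hookrightarrow L^{2\rho}\). I would first try to obtain it from \(L^p\)-convergence of \(u_n\) below the critical exponent together with uniform integrability. If that fails, I would use the extra regularity from the strong damping, namely \(u_{n,t}\) bounded in \(L^2(H^1_0)\), and pair \(f(u_n)\) only against \(H_0^1\) functions. This needs \(f(u_n)\) convergent in \(L^2(H^{-1})\), which follows from boundedness in \(L^2(L^{q})\) with \(q<2\) admissible by Sobolev duality. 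The a.e. convergence then gives strong convergence in \(L^2(H^{-1})\).

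With both pieces in place, Theorem \ref{thm2.5} yields the pullback \(\mathcal D_\alpha\)-asymptotic compactness.
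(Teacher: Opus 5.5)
Your proof is correct, but it handles the nonlinear term by a different mechanism than the paper.

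\textbf{The paper's route.} The paper tests the difference equation with $w_t$ and with $w$ separately, then averages over the starting time $s\in[\tau,q]$; this produces the double integrals. Into $\Psi_{t,T_0}$ it places the boundary terms $|(w_t(q),w(q))|$ and $\|\nabla w(q)\|^2$, the term $\int_\tau^q\|w\|^2$, and the nonlinear integrals with the absolute value \emph{inside}, e.g.\ $e^{-\alpha q}\int_\tau^q e^{\alpha s}|(f(u_1)-f(u_2),w_t)|\,ds$. The initial-history part of the delay is removed through the factor $e^{-\alpha(q-\tau)}$. Contractivity then rests on the strong damping. Since $(u_n)_t$ is bounded in $L^2(\tau,t;H_0^1(\Omega))$ and $(u_n)_{tt}$ in $L^2(\tau,t;V')$, Aubin--Lions gives $(u_n)_t\to u_t$ strongly in $L^2(\tau,t;H)$, see \eqref{4.37}. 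Every nonlinear term is therefore bounded by $\|f(u_n)-f(u_m)\|_{L^2(\tau,t;H)}\|(u_n)_t-(u_m)_t\|_{L^2(\tau,t;H)}$. This uses only boundedness of $f(u_n)$ in $L^2(\tau,t;H)$, so the critical case costs nothing.

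\textbf{Your route.} You use one functional $\mathcal L_w$ with one exponential weight, and you send the whole delay contribution, including the initial segment, into $\Psi$ via compactness in $C([t-T-r,t];H)$. You treat the signed nonlinear integral by the $F$-identity. This is cleaner on the energy side. It is also the device designed for weakly damped critical wave equations, so it would survive without velocity compactness. The price is three extra verifications:
\begin{itemize}
\item $\int_\Omega F(u_n(s))\to\int_\Omega F(u(s))$ uniformly in $s$, which is true because $\rho+1<2N/(N-4)$;
\item the chain rule $\frac{d}{ds}\int_\Omega F(u)=(f(u),u_t)$ for the limit $u$;
\item uniformity in $\theta$ of the iterated limits, which holds because the integrals are equi-Lipschitz in $\theta$.
\end{itemize}

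\textbf{Your ``main obstacle'' is not one.} In $\lim_n\lim_m$, each cross term $(f(u_n),u_{m,t})$ and $(f(u_m),u_{n,t})$ has one factor frozen while the other converges. So the weak convergence $f(u_m)\rightharpoonup f(u)$ in $L^2(H)$, together with $u_{m,t}\rightharpoonup u_t$, already yields the limit $\int e^{\beta(s-t)}(f(u),u_t)\,ds$. That weak convergence follows from boundedness plus a.e.\ convergence, also for $\rho=N/(N-4)$. Strong convergence of $f(u_n)$ in $L^2(H)$ is never needed. Your $L^2(H^{-1})$ fallback is valid but superfluous. Alternatively, you could use the strong velocity compactness above and bypass the $F$-identity entirely.

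\textbf{Small points.}
\begin{itemize}
\item What you need from the decay rate of $\mathcal L_w$ is only that it be at least $\alpha$. Identifying it with the $\beta$ of Lemma 4.1 is meant to provide this; the paper makes the same implicit assumption in \eqref{4.27}.
\item (G4) is assumed only for weights $e^{ms}$ with $m\le m_0$. So first bound $e^{\beta(s-t-\theta)}\le 1$, then apply (G4) with $m=0$.
\item The uniform bounds on $[t-T,t]$ come from \eqref{3.12}, not from any invariance of $D_0$.
\end{itemize}
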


{\bf Proof.}
Fix \(t\in\mathbb R\). By Corollary \ref{cor4.3}, it is sufficient to establish pullback asymptotic compactness on the absorbing family \(\widehat D_0=\{D_0(t):t\in\mathbb R\}\). Choose \(T_0>2r\) and set \(\tau=t-T_0\). For \(i=1,2\), let \((\phi_{0,i},\phi_{1,i})\in D_0(\tau)\), and denote by \(u_i\) the corresponding solution of \eqref{1.1}. Setting \(w=u_1-u_2\), \(w\) satisfies
\begin{equation}\label{4.18}
w_{tt}-\Delta w_t+\Delta^2w+\lambda w+f(u_1)-f(u_2)
=
G(t,u_1^t)-G(t,u_2^t).
\end{equation}

Define
\begin{equation}\label{4.19}
E_w(t)
=
\frac12\|w_t(t)\|^{2}
+\frac12\|\Delta w(t)\|^{2}
+\frac{\lambda}{2}\|w(t)\|^{2}.
\end{equation}

Taking the inner product of \eqref{4.18} with \(w_t\) in \(L^2(\Omega)\), we obtain
\begin{equation}\label{4.20}
\begin{aligned}
\frac{d}{dt}E_w(t)+\|\nabla w_t(t)\|^{2}
&=
(G(t,u_1^t)-G(t,u_2^t),w_t(t))  \\
&-(f(u_1(t))-f(u_2(t)),w_t(t)).
\end{aligned}
\end{equation}

For simplicity, set
\begin{equation}\label{4.21}
g_w(s)=G(s,u_1^s)-G(s,u_2^s),
\qquad
f_w(s)=f(u_1(s))-f(u_2(s)).
\end{equation}

Then \eqref{4.20} can be rewritten as
\begin{equation}\label{4.22}
\frac{d}{ds}E_w(s)+\|\nabla w_t(s)\|^2
=(g_w(s),w_t(s))-(f_w(s),w_t(s)).
\end{equation}

By the Poincar\'e and Young inequalities, we conclude
\begin{equation}\label{4.23}
\begin{aligned}
|(g_w(s),w_t(s))|
&\leq
\|g_w(s)\|\,\|w_t(s)\|        \\
&\leq
C\|g_w(s)\|\,\|\nabla w_t(s)\| \\
&\leq
\frac14\|\nabla w_t(s)\|^2+C\|g_w(s)\|^2 .
\end{aligned}
\end{equation}

Combining \eqref{4.22} with \eqref{4.23}, we get
\begin{equation}\label{4.24}
\frac{d}{ds}E_w(s)+\frac34\|\nabla w_t(s)\|^2
\leq
C\|g_w(s)\|^2+|(f_w(s),w_t(s))|.
\end{equation}

Multiplying \eqref{4.24} by \(e^{\alpha s}\), we infer that
\begin{equation}\label{4.25}
\begin{aligned}
\frac{d}{ds}\left(e^{\alpha s}E_w(s)\right)
+\frac34 e^{\alpha s}\|\nabla w_t(s)\|^2
&\leq
C e^{\alpha s}E_w(s)
+C e^{\alpha s}\|g_w(s)\|^2       \\
&+e^{\alpha s}|(f_w(s),w_t(s))|.
\end{aligned}
\end{equation}

To estimate \(E_w(s)\), we take \(w(s)\) as a test function in
\eqref{4.18}. Integrating by parts and using the boundary conditions,
the definition of \(E_w(s)\) and the Poincar\'e inequality, we obtain
\begin{equation}\label{4.26}
\begin{aligned}
E_w(s)
&\leq
C\|\nabla w_t(s)\|^2
-\frac{d}{ds}(w_t(s),w(s))
-\frac12\frac{d}{ds}\|\nabla w(s)\|^2 \\
&\quad
+C\bigl|(f_w(s),w(s))\bigr|
+C\bigl|(g_w(s),w(s))\bigr|.
\end{aligned}
\end{equation}

Combining \eqref{4.26} with the preceding energy inequality, integrating the resulting relation over \([s,q]\), where \(\tau\leq s\leq q\), and absorbing the term involving \(\|\nabla w_t\|^2\) into the dissipation generated by the strong damping, we derive
\begin{equation}\label{4.27}
\begin{aligned}
E_w(q)
&\leq
C e^{-\alpha(q-s)}E_w(s)
+C|(w_t(q),w(q))|
+C\|\nabla w(q)\|^2  \\
&+C e^{-\alpha q}\int_s^q e^{\alpha \xi}\|g_w(\xi)\|^2\,d\xi
+C e^{-\alpha q}\int_s^q e^{\alpha \xi}|(g_w(\xi),w(\xi))|\,d\xi  \\
&+C e^{-\alpha q}\int_s^q e^{\alpha \xi}\|w(\xi)\|^2\,d\xi
+C e^{-\alpha q}\int_s^q e^{\alpha \xi}|(f_w(\xi),w_t(\xi))|\,d\xi  \\
&+C e^{-\alpha q}\int_s^q e^{\alpha \xi}|(f_w(\xi),w(\xi))|\,d\xi .
\end{aligned}
\end{equation}

Indeed, by the Young inequality, we conclude
\begin{equation}\label{4.28}
|(g_w(\xi),w(\xi))|
\leq
C\|g_w(\xi)\|^2+C\|w(\xi)\|^2.
\end{equation}

Moreover, by {\rm (G4)}, we derive
\begin{equation}\label{4.29}
\int_s^q e^{\alpha \xi}\|g_w(\xi)\|^2\,d\xi
\leq
C_G^2\int_{s-r}^{q}e^{\alpha \xi}\|w(\xi)\|^2\,d\xi.
\end{equation}

Splitting \([s-r,q]\) into the initial-history part and the solution part, we get
\begin{equation}\label{4.30}
e^{-\alpha q}\int_s^q e^{\alpha \xi}\|g_w(\xi)\|^2\,d\xi
\leq
C e^{-\alpha(q-\tau)}
\|\phi_{0,1}-\phi_{0,2}\|_{C_H}^2
+
C\int_{\tau}^{q}\|w(\xi)\|^2\,d\xi .
\end{equation}

The same estimate holds for the term involving \((g_w,w)\). Therefore, combining \eqref{4.27}--\eqref{4.30}, we conclude
\begin{equation}\label{4.31}
\begin{aligned}
E_w(q)
&\leq
C e^{-\alpha(q-s)}E_w(s)
+C e^{-\alpha(q-\tau)}
\|\phi_{0,1}-\phi_{0,2}\|_{C_H}^2  \\
&+C|(w_t(q),w(q))|
+C\|\nabla w(q)\|^2
+C\int_\tau^q\|w(\xi)\|^2\,d\xi  \\
&+C e^{-\alpha q}\int_s^q e^{\alpha \xi}|(f_w(\xi),w_t(\xi))|\,d\xi  \\
&+C e^{-\alpha q}\int_s^q e^{\alpha \xi}|(f_w(\xi),w(\xi))|\,d\xi .
\end{aligned}
\end{equation}

Integrating \eqref{4.31} with respect to \(s\) over \([\tau,q]\) and dividing by \(q-\tau\), the integrals depending on \(s\) generate the double integral terms. Since \(f_w=f(u_1)-f(u_2)\), we finally obtain
\begin{equation}\label{4.32}
\begin{aligned}
E_w(q)
&\leq
\frac{C}{q-\tau}e^{-\alpha(q-\tau)}E_w(\tau)
+C e^{-\alpha(q-\tau)}
\|\phi_{0,1}-\phi_{0,2}\|_{C_H}^{2}  \\
&+C|(w_t(q),w(q))|
+C\|\nabla w(q)\|^{2}
+C\int_{\tau}^{q}\|w(s)\|^{2}\,ds  \\
&+C e^{-\alpha q}\int_{\tau}^{q}e^{\alpha s}
|(f(u_1(s))-f(u_2(s)),w_t(s))|\,ds  \\
&+C e^{-\alpha q}\int_{\tau}^{q}e^{\alpha s}
|(f(u_1(s))-f(u_2(s)),w(s))|\,ds  \\
&+C e^{-\alpha q}\int_{\tau}^{q}\int_{\sigma}^{q}e^{\alpha s}
|(f(u_1(s))-f(u_2(s)),w_t(s))|\,dsd\sigma  \\
&+C e^{-\alpha q}\int_{\tau}^{q}\int_{\sigma}^{q}e^{\alpha s}
|(f(u_1(s))-f(u_2(s)),w(s))|\,dsd\sigma .
\end{aligned}
\end{equation}

Since \(q\in[t-r,t]\) and \(\tau=t-T_0\), we have \(q-\tau\geq T_0-r\). Hence, for \(T_0>2r\) sufficiently large, the factor \(e^{-\alpha(q-\tau)}\) is uniformly small for all \(q\in[t-r,t]\). Moreover, the absorbing property of \(D_0(\tau)\) implies that \(E_w(\tau)\) and \(\|\phi_{0,1}-\phi_{0,2}\|_{C_H}^2\) are uniformly controlled for \((\phi_{0,i},\phi_{1,i})\in D_0(\tau)\). Therefore, the first two terms on the right-hand side of \eqref{4.32} can be made smaller than any prescribed \(\varepsilon>0\) by taking \(T_0\) large enough. Thus, it follows that
\vspace{-0.5em}
\begin{equation}\label{4.33}
\sup_{q\in[t-r,t]}E_w(q)\leq\varepsilon+\Psi_{t,T_0}\left((\phi_{0,1},\phi_{1,1}),(\phi_{0,2},\phi_{1,2})\right),
\end{equation}
\vspace{-0.8em}
where \(\Psi_{t,T_0}\) is defined by
\begingroup
\setlength{\abovedisplayskip}{2pt}
\setlength{\belowdisplayskip}{2pt}
\setlength{\abovedisplayshortskip}{2pt}
\setlength{\belowdisplayshortskip}{2pt}
\setlength{\jot}{1pt}
\begin{equation}\label{4.34}
\begin{aligned}
&\Psi_{t,T_0}\left((\phi_{0,1},\phi_{1,1}),
(\phi_{0,2},\phi_{1,2})\right)\\[-1pt]
&=C\sup_{q\in[t-r,t]}\Bigg[
|(w_t(q),w(q))|+\|\nabla w(q)\|^2
+\int_\tau^q\|w(s)\|^2\,ds\\[-1pt]
&+e^{-\alpha q}\int_\tau^q e^{\alpha s}
|(f(u_1(s))-f(u_2(s)),w_t(s))|\,ds\\[-1pt]
&+e^{-\alpha q}\int_\tau^q e^{\alpha s}
|(f(u_1(s))-f(u_2(s)),w(s))|\,ds\\[-1pt]
&+e^{-\alpha q}\int_\tau^q\int_\sigma^q e^{\alpha s}
|(f(u_1(s))-f(u_2(s)),w_t(s))|\,ds\,d\sigma\\[-1pt]
&+e^{-\alpha q}\int_\tau^q\int_\sigma^q e^{\alpha s}
|(f(u_1(s))-f(u_2(s)),w(s))|\,ds\,d\sigma
\Bigg].
\end{aligned}
\end{equation}
\endgroup

To show that
\(\Psi_{t,T_0}\in\operatorname{Contr}(D_0(\tau))\), consider any
sequence \(\{(\phi_{0,n},\phi_{1,n})\}_{n=1}^{\infty}\subset
D_0(\tau)\), and denote by \(u_n\) the corresponding solution.
Since \(D_0(\tau)\) is bounded in \(\mathcal H_c\), estimates
\eqref{3.12}, \eqref{3.15}, and \eqref{3.18} imply the existence of
a constant \(C>0\), independent of \(n\), such that
\begin{equation}\label{4.35}
\|u_n\|_{L^\infty(\tau,t;V)}
+\|(u_n)_t\|_{L^\infty(\tau,t;H)}
+\|(u_n)_t\|_{L^2(\tau,t;H_0^1(\Omega))}
+\|(u_n)_{tt}\|_{L^2(\tau,t;V')}
\leq C .
\end{equation}

We first explain the compactness of the displacement sequence. Since \(V\hookrightarrow\hookrightarrow H_0^1(\Omega)\hookrightarrow H\), and since \(\{u_n\}\) is bounded in \(L^\infty(\tau,t;V)\) while \(\{\partial_tu_n\}\) is bounded in \(L^2(\tau,t;H)\), the Aubin--Lions--Simon compactness theorem implies that there exist a subsequence, still denoted by \(\{u_n\}\), and a function \(u\) such that
\begin{equation}\label{4.36}
u_n\to u
\quad \text{strongly in } C([t-r,t];H_0^1(\Omega)).
\end{equation}
This compactness is obtained by applying the Simon version of the Aubin--Lions theorem with \(X_0=V\), \(X=H_0^1(\Omega)\) and \(X_1=H\). The embedding \(V\hookrightarrow\hookrightarrow H_0^1(\Omega)\) is compact, while \(H_0^1(\Omega)\hookrightarrow H\) is continuous.

We next explain the compactness of the velocity sequence. Since \(\{(u_n)_t\}\) is bounded in \(L^2(\tau,t;H_0^1(\Omega))\) and \(\{\partial_t (u_n)_t\}=\{(u_n)_{tt}\}\) is bounded in \(L^2(\tau,t;V')\), and since \(H_0^1(\Omega)\hookrightarrow\hookrightarrow H\hookrightarrow V'\), the Aubin--Lions compactness theorem ensures the existence of a subsequence, still denoted by \(\{(u_n)_t\}\), such that
\begin{equation}\label{4.37}
(u_n)_t\to u_t
\quad \text{strongly in } L^2(\tau,t;H).
\end{equation}

Consequently, setting \(w_{nm}=u_n-u_m\), we derive
\begin{equation}\label{4.38}
\lim_{n\to\infty}\lim_{m\to\infty}
\sup_{q\in[t-r,t]}\|w_{nm}(q)\|_{H_0^1(\Omega)}^{2}=0,
\end{equation}
and
\begin{equation}\label{4.39}
\lim_{n\to\infty}\lim_{m\to\infty}
\int_{\tau}^{t}\|(u_n)_t(s)-(u_m)_t(s)\|^{2}\,ds=0.
\end{equation}

Moreover, \eqref{4.36} also implies
\begin{equation}\label{4.40}
\lim_{n\to\infty}\lim_{m\to\infty}
\int_{\tau}^{t}\|u_n(s)-u_m(s)\|^{2}\,ds=0.
\end{equation}

We now verify that each term in \(\Psi_{t,T_0}\) has double limit zero along the above subsequence. First, by the uniform boundedness of \(\{(u_n)_t\}\) in \(L^\infty(\tau,t;H)\) and \eqref{4.40}, we obtain
\begin{equation}\label{4.41}
\begin{aligned}
&\sup_{q\in[t-r,t]}
\left|((u_n)_t(q)-(u_m)_t(q),w_{nm}(q))\right|       \\
&\qquad
\leq
C\sup_{q\in[t-r,t]}\|w_{nm}(q)\|
\to0
\quad \text{as } n\to\infty,\ m\to\infty
\end{aligned}
\end{equation}
in the double-limit sense. Therefore, we derive
\begin{equation}\label{4.42}
\lim_{n\to\infty}\lim_{m\to\infty}
\sup_{q\in[t-r,t]}
\left|((u_n)_t(q)-(u_m)_t(q),w_{nm}(q))\right|=0.
\end{equation}

Second, from the strong convergence in \(C([t-r,t];H_0^1(\Omega))\), we directly conculde
\begin{equation}\label{4.43}
\lim_{n\to\infty}\lim_{m\to\infty}
\sup_{q\in[t-r,t]}\|\nabla w_{nm}(q)\|^{2}=0.
\end{equation}

Third, by \eqref{4.40}, we obtain
\begin{equation}\label{4.44}
\lim_{n\to\infty}\lim_{m\to\infty}
\sup_{q\in[t-r,t]}
\int_{\tau}^{q}\|w_{nm}(s)\|^{2}\,ds=0.
\end{equation}

It remains to estimate the nonlinear terms in \(\Psi_{t,T_0}\). By the growth condition \eqref{1.9} and the uniform boundedness of \(\{u_n\}\) in \(L^\infty(\tau,t;V)\), the sequence \(\{f(u_n)\}\) is bounded in \(L^2(\tau,t;H)\). Therefore, there exists \(C>0\), independent of \(n,m\), such that
\begin{equation}\label{4.45}
\|f(u_n)-f(u_m)\|_{L^2(\tau,t;H)}\leq C.
\end{equation}

Combining \eqref{4.39} with the Cauchy--Schwarz inequality, we derive
\begin{equation}\label{4.46}
\begin{aligned}
&\int_{\tau}^{t}e^{\alpha s}
\left|
(f(u_n(s))-f(u_m(s)),(u_n)_t(s)-(u_m)_t(s))
\right|\,ds       \\
&\qquad
\leq
C
\|f(u_n)-f(u_m)\|_{L^2(\tau,t;H)}
\|(u_n)_t-(u_m)_t\|_{L^2(\tau,t;H)}
\to0.
\end{aligned}
\end{equation}

Hence, we conclude
\begin{equation}\label{4.47}
\lim_{n\to\infty}\lim_{m\to\infty}
\int_{\tau}^{t}e^{\alpha s}
\left|
(f(u_n(s))-f(u_m(s)),(u_n)_t(s)-(u_m)_t(s))
\right|\,ds=0.
\end{equation}

Similarly, by \eqref{4.40} and the Cauchy--Schwarz inequality, we obtain
\begin{equation}\label{4.48}
\begin{aligned}
&\int_{\tau}^{t}e^{\alpha s}
\left|
\bigl(f(u_n(s))-f(u_m(s)),u_n(s)-u_m(s)\bigr)
\right|\,ds\\
&\qquad\leq
C\|f(u_n)-f(u_m)\|_{L^2(\tau,t;H)}
\|u_n-u_m\|_{L^2(\tau,t;H)}
\longrightarrow0
\quad\text{as }n,m\to\infty,
\end{aligned}
\end{equation}
where \(C>0\) is independent of \(n\) and \(m\).

Consequently,
\begin{equation}\label{4.49}
\lim_{n,m\to\infty}
\int_{\tau}^{t}e^{\alpha s}
\left|
\bigl(f(u_n(s))-f(u_m(s)),u_n(s)-u_m(s)\bigr)
\right|\,ds=0.
\end{equation}

For the last two double-integral terms in \eqref{4.34}, the Fubini theorem together with \(q\leq t\) gives the corresponding single-integral estimates. Therefore, by \eqref{4.47} and \eqref{4.49}, we obtain
\begin{equation}\label{4.50}
\begin{aligned}
&e^{-\alpha q}
\int_{\tau}^{q}\int_{\sigma}^{q}e^{\alpha s}
\left|
(f(u_n(s))-f(u_m(s)),(u_n)_t(s)-(u_m)_t(s))
\right|\,dsd\sigma       \\
&\qquad
\leq
C_{t,\tau}
\int_{\tau}^{t}
\left|
(f(u_n(s))-f(u_m(s)),(u_n)_t(s)-(u_m)_t(s))
\right|\,ds,
\end{aligned}
\end{equation}
where \(C_{t,\tau}>0\) is independent of \(n,m\) and \(q\in[t-r,t]\). Combining \eqref{4.47} with \eqref{4.50}, we get
\begin{equation}\label{4.51}
\lim_{n\to\infty}\lim_{m\to\infty}
\sup_{q\in[t-r,t]}
e^{-\alpha q}
\int_{\tau}^{q}\int_{\sigma}^{q}e^{\alpha s}
\left|
(f(u_n(s))-f(u_m(s)),(u_n)_t(s)-(u_m)_t(s))
\right|\,dsd\sigma=0.
\end{equation}

Similarly, by \eqref{4.49}, we derive
\begin{equation}\label{4.52}
\lim_{n\to\infty}\lim_{m\to\infty}
\sup_{q\in[t-r,t]}
e^{-\alpha q}
\int_{\tau}^{q}\int_{\sigma}^{q}e^{\alpha s}
\left|
(f(u_n(s))-f(u_m(s)),u_n(s)-u_m(s))
\right|\,dsd\sigma=0.
\end{equation}

It follows from \eqref{4.42}--\eqref{4.44} and \eqref{4.47}--\eqref{4.52} that
\begin{equation}\label{4.53}
\lim_{n\to\infty}\lim_{m\to\infty}
\Psi_{t,T_0}\left((\phi_{0,n},\phi_{1,n}),(\phi_{0,m},\phi_{1,m})\right)=0.
\end{equation}
Hence, \(\Psi_{t,T_0}\) is a contractive function on \(D_0(\tau)\times D_0(\tau)\).

Finally, since $E_w(q)$ is equivalent to $\|w(q)\|_V^2+\|w_t(q)\|^2$ and $q\in[t-r,t]$, taking the supremum over the history interval yields the control of the $C_V\times C_H$-norm. Therefore, by \eqref{4.34}, we obtain
\begin{equation}\label{4.54}
\|U(t,\tau)(\phi_{0,1},\phi_{1,1})
-U(t,\tau)(\phi_{0,2},\phi_{1,2})\|_{\mathcal H_c}^{2}
\leq
\varepsilon+
\Psi_{t,T_0}\left((\phi_{0,1},\phi_{1,1}),(\phi_{0,2},\phi_{1,2})\right).
\end{equation}

By Corollary \ref{cor4.3}, $\widehat D_0$ is a pullback $\mathcal D_\alpha$-absorbing family. Moreover, \eqref{4.53} and \eqref{4.54} show that $\Psi_{t,T_0}\in\operatorname{Contr}(D_0(t-T_0))$ and that the contractive estimate required in Theorem \ref{thm2.5} holds. Therefore, the process $\{U(t,\tau)\}_{t\geq\tau}$ is pullback $\mathcal D_\alpha$-asymptotically compact in $\mathcal H_c$. \hfill$\Box$

Combining Corollary \ref{cor4.3} with Theorem \ref{thm4.4}, we obtain the following main result.

\begin{Theorem}\label{thm4.5}
Under the assumptions \eqref{1.13} and \eqref{4.11}, the process
\(\{U(t,\tau)\}_{t\geq\tau}\) generated by equation \eqref{1.1}
possesses a unique pullback \(\mathcal D_\alpha\)-attractor
\(\widehat{\mathcal A}=\{\mathcal A(t):t\in\mathbb R\}\) in
\(\mathcal H_c\). Moreover, \(\widehat{\mathcal A}\) pullback attracts
every \(\widehat D\in\mathcal D_\alpha\) with respect to the
\(\mathcal H_c\)-norm.
\end{Theorem}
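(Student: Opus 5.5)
The plan is to invoke the abstract existence result, Theorem \ref{thm2.6}, for the continuous process \(\{U(t,\tau)\}_{t\geq\tau}\) on \(\mathcal H_c\) with the class \(\mathcal D=\mathcal D_\alpha\). This requires three ingredients, and I will check them in order.

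\emph{Step 1: continuity of the process.} I will first confirm that \(\{U(t,\tau)\}_{t\geq\tau}\) is a genuine continuous process on \(\mathcal H_c\). Theorem \ref{thm3.1} shows that \(U(t,\tau)\) maps \(\mathcal H_c\) into \(\mathcal H_c\), since the compatibility \(\partial_\theta u^t=(u_t)^t\) is preserved. Uniqueness from Theorem \ref{thm3.2} gives \(U(\tau,\tau)=I\) and the cocycle identity \eqref{3.52}. The Lipschitz estimate \eqref{3.41} on bounded sets gives continuity of each map \(U(t,\tau)\). I will note explicitly that \(\mathcal H_c\) is a closed subspace of \(C_V\times C_H\): a uniform limit of compatible pairs with \(\partial_\theta\phi_0^n=\phi_1^n\) in \(C_H\) stays compatible, by integrating \(\phi_0^n(\theta)=\phi_0^n(-r)+\int_{-r}^\theta\phi_1^n\) and passing to the limit. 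Hence \(\mathcal H_c\) is a complete metric space, which the abstract theorem implicitly needs.

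\emph{Step 2: the two hypotheses of Theorem \ref{thm2.6}.} Hypothesis (i) is supplied by Corollary \ref{cor4.3}. Under \eqref{1.13} and \eqref{4.11}, the family \(\widehat D_0\) of balls of radius \(\rho(t)\) is pullback \(\mathcal D_\alpha\)-absorbing and also belongs to \(\mathcal D_\alpha\). Hypothesis (ii) is exactly Theorem \ref{thm4.4}, which was obtained through Theorem \ref{thm2.5} with the contractive function \(\Psi_{t,T_0}\) on \(D_0(t-T_0)\). Applying Theorem \ref{thm2.6} then yields a family \(\widehat{\mathcal A}\) that is compact, invariant, and pullback attracting every \(\widehat D\in\mathcal D_\alpha\) in the \(\mathcal H_c\)-norm. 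The natural explicit description is
\[
\mathcal A(t)=\bigcap_{s\leq t}\overline{\bigcup_{\tau\leq s}U(t,\tau)D_0(\tau)}^{\,\mathcal H_c}.
\]

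\emph{Step 3: uniqueness.} Uniqueness of a pullback attractor is only meaningful within a class, so I will make it precise by requiring \(\widehat{\mathcal A}\in\mathcal D_\alpha\), or minimality among closed attracting families. I expect this to be the main obstacle. I would first check it directly: \(\mathcal A(t)\subset \overline{D_0(t)}\) by absorption, so \(\widehat{\mathcal A}\in\mathcal D_\alpha\) follows from \(\widehat D_0\in\mathcal D_\alpha\). Then, given two such attractors \(\widehat{\mathcal A}_1\) and \(\widehat{\mathcal A}_2\) in \(\mathcal D_\alpha\), invariance gives
\[
\operatorname{dist}(\mathcal A_1(t),\mathcal A_2(t))=\operatorname{dist}\bigl(U(t,\tau)\mathcal A_1(\tau),\mathcal A_2(t)\bigr),
\]
and this tends to \(0\) as \(\tau\to-\infty\) because \(\widehat{\mathcal A}_2\) attracts \(\widehat{\mathcal A}_1\). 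Since the attractor sets are compact and hence closed, symmetry gives \(\mathcal A_1(t)=\mathcal A_2(t)\).
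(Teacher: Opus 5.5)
Your proposal is correct and follows the same route as the paper. The paper proves Theorem \ref{thm4.5} by feeding the absorbing family $\widehat D_0\in\mathcal D_\alpha$ from Corollary \ref{cor4.3} and the asymptotic compactness of Theorem \ref{thm4.4} into the abstract Theorem \ref{thm2.6}; your extra checks are sound and make explicit what the paper leaves to the abstract theorem, namely the closedness of $\mathcal H_c$ in $C_V\times C_H$, the inclusion $\mathcal A(t)\subset\overline{D_0(t)}$, and the invariance-based uniqueness argument within $\mathcal D_\alpha$.
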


{\bf Proof.}
By Corollary \ref{cor4.3}, the process \(\{U(t,\tau)\}_{t\geq\tau}\) has a pullback \(\mathcal D_\alpha\)-absorbing family \(\widehat D_0\in\mathcal D_\alpha\) in \(\mathcal H_c\). By Theorem \ref{thm4.4}, this process is pullback \(\mathcal D_\alpha\)-asymptotically compact in \(\mathcal H_c\). Therefore, by Theorem \ref{thm2.6}, there exists a unique pullback \(\mathcal D_\alpha\)-attractor \(\widehat{\mathcal A}\) in \(\mathcal H_c\). \(\hfill\Box\)

$\mathbf{Funding}$


This work was supported by the National Natural Science Foundation of China (Grant Nos. 12462005 and 62063025), the Natural Science Foundation of Inner Mongolia Autonomous Region of China (Grant Nos. 2024MS06028, 2026QC0419, and 2026MS0478), and the Keju Plan of Inner Mongolia University of Science and Technology (Grant Nos. KJJH2024985 and KJJH2025991).
$\mathbf{Conflict\,\,of\,\,interest\,\,statement}$

The authors have no conflict of interest.

$\mathbf{Acknowledgment}$

The authors sincerely thank the editors for their careful handling of the manuscript and the anonymous reviewers for their valuable and constructive comments, which helped improve its quality.

\end{document}